\title{Parametrizing an integer linear program by an integer}
\newcommand\blfootnote[1]{%
  \begingroup
  \renewcommand\thefootnote{}\footnote{#1}%
  \addtocounter{footnote}{-1}%
  \endgroup
}
\begin{document}
\maketitle
{\centering Bobby Shen\footnote{Bobby Shen,  Department of Mathematics, Massachusetts Institute of Technology,
Cambridge, Massachusetts, USA, \url{runbobby@mit.edu}} \par}
\newtheorem{theorem}{Theorem}[section]
\newtheorem{corollary}{Corollary}[theorem]
\newtheorem{lemma}[theorem]{Lemma}
\newtheorem{proposition}[theorem]{Proposition}
\newtheorem{conjecture}[theorem]{Conjecture}
 
\theoremstyle{definition}
\newtheorem{definition}[theorem]{Definition}
 
\theoremstyle{remark}
\newtheorem*{remark}{Remark}

\blfootnote{2010 \textit{Mathematics Subject Classsifications}: 11H06, 52C07, 90C10, 90C31 \\ Keywords: integer programming, parametric integer linear programming, convex hull, quasi-polynomial}

\begin{abstract}
We consider a family of integer linear programs in which the coefficients of the constraints and objective function are polynomials of an integer parameter $t.$ For $\ell$ in $\mathbb{Z}_+,$ we define $f_\ell(t)$ to be the $\ell^{\text{th}}$ largest value of the objective function with multiplicity for the integer linear program at $t.$ We prove that for all $\ell,$ $f_\ell$ is eventually quasi-polynomial; that is, there exists $d$ and polynomials $P_0, \ldots, P_{d-1}$ such that for sufficiently large $t,$ $f_\ell(t)=P_{t \pmod{d}}(t).$ Closely related to finding the $\ell^{\text{th}}$ largest value is describing the vertices of the convex hull of the feasible set. Calegari and Walker showed that if $R(t)$ is the convex hull of $\mathbf{v_1}(t), \ldots, \mathbf{v_k}(t)$ where $\mathbf{v_i}$ is a vector whose coordinates are in $\mathbb{Q}(t)$ and of size $O(t),$ then the vertices of the convex hull of the set of lattice points in $R(t)$ has eventually quasi-polynomial structure. We prove this without the $O(t)$ assumption.
\end{abstract}
\section{Introduction}

An integer program is the optimization of a certain objective function over the integers subject to certain constraints. Often in integer programming, the constraints and objective functions are linear functions of the indeterminates. This is known as integer linear programming. 

Suppose that the indeterminates are $\mathbf{x}=(x_1, \ldots, x_n).$ A program in general form has the following structure:

1) The objective function is $ \mathbf{c}^{\intercal} \mathbf{x}$ for $\mathbf{c} \in \mathbb{Z}^n$ 

2) The constraints are $A\mathbf{x} \le \mathbf{b}$ for $A \in \mathbb{Z}^{m \times n}$ and $b \in \mathbb{Z}^m.$ (In this paper, relations between vectors are coordinate-wise.) 

Parametric Integer Linear Programming (PILP) refers to considering a family of linear integer programs parametrized by a variable $t$ i.e. the coefficients of the objective and/or constraints are functions of $t.$ The optimum value of the objective function is a function of $t$ (which we call the optimum value function), which leads us to questions about this function. Examples for the domain of $t$ are the interval $[0,1]$ or the positive integers. 
Our main question concerns parametric integer linear programs in which all coefficients are integer polynomials in $t$ and $t$ ranges over the positive integers. In this paper, $t$ is a positive integer. We prove that the optimum value function is eventually quasi-polynomial.

\begin{definition} $g$ is \textit{eventually quasi-polynomial} (EQP) if its domain is a subset of $\mathbb{Z}$ that contains all sufficiently large integers and there exists a positive integer $d$ and polynomials $P_0, \ldots, P_{d-1} \in \mathbb{R}[t]$ such that for sufficiently large $t,$ $g(t)=P_{t \pmod{d}}(t).$
\end{definition}

There are many results on EQPs ranging from the simple to the sophisticated. For example, if $P_1, \ldots, P_n$ are in $\mathbb{Z}[t],$ then $\gcd(P_1(t), \ldots, P_n(t))$ and $\text{lcm}(P_1(t), \ldots, P_n(t))$ are quasi-polynomials of $t$ (even when the domain is all integers). If $P$ is in $\mathbb{Q}(t),$ one can show that $\lfloor P(t) \rfloor$ is EQP; it is in fact implied by Theorem \ref{GF} below. An early result on quasi-polynomials was by Ehrhart in \cite{Ehr}. 

\begin{theorem}
Let $P$ be a convex polytope whose vertices are rational vectors. For all $t,$ let $f(t)$ be the number of lattice points in $tP.$ Then $f$ is quasi-polynomial (\textit{not} just EQP). \end{theorem}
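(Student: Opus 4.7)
The plan is to reduce to rational simplices by triangulation, lift each simplex to a cone in one higher dimension, and read off quasi-polynomiality from the partial fraction expansion of a rational generating function.

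First, triangulate $P$ into closed rational simplices $\sigma_1, \ldots, \sigma_r$. Using inclusion-exclusion over the faces of the triangulation (or, equivalently, a half-open simplicial decomposition), the counting function $f(t) = |tP \cap \mathbb{Z}^n|$ becomes a $\mathbb{Z}$-linear combination of counts of the form $t \mapsto |t\tau \cap \mathbb{Z}^n|$ ranging over closed rational simplices $\tau$. It thus suffices to prove that for each rational simplex $\sigma$, the function $t \mapsto |t\sigma \cap \mathbb{Z}^n|$ is quasi-polynomial on $\mathbb{Z}_{\geq 0}$.

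For a simplex $\sigma \subset \mathbb{R}^n$ with rational vertices $v_1, \ldots, v_{n+1}$, choose $d \in \mathbb{Z}_+$ with $dv_i \in \mathbb{Z}^n$ for each $i$, and form the cone $C = \{\sum \lambda_i (v_i, 1) : \lambda_i \geq 0\} \subset \mathbb{R}^{n+1}$. A lattice point of $t\sigma$ corresponds bijectively to a lattice point of $C$ at height $t$. The cone admits a disjoint decomposition
\[
C = \bigsqcup_{(k_1, \ldots, k_{n+1}) \in \mathbb{Z}_{\geq 0}^{n+1}} \Bigl(\Pi + \sum_i k_i \cdot d(v_i, 1)\Bigr),
\]
where $\Pi = \{\sum_i \mu_i \cdot d(v_i, 1) : 0 \leq \mu_i < 1\}$ is the fundamental half-open parallelepiped spanned by the integer vectors $d(v_i, 1)$. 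Setting $\sigma_\Pi(z) = \sum_{(x, h) \in \Pi \cap \mathbb{Z}^{n+1}} z^h$ and summing the resulting geometric series yields
\[
\sum_{t \geq 0} |t\sigma \cap \mathbb{Z}^n| \, z^t = \frac{\sigma_\Pi(z)}{(1 - z^d)^{n+1}}.
\]
A partial fraction decomposition over $\mathbb{C}$, whose poles lie among the $d$-th roots of unity, expresses the coefficient $[z^t]$ of the right-hand side as a quasi-polynomial in $t$ of period dividing $d$, valid for \emph{all} $t \geq 0$.

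The main obstacle is ensuring \emph{quasi-polynomial} (not merely EQP) behavior on all of $\mathbb{Z}_{\geq 0}$, including small $t$. Two points need verification: the reduction from $P$ to simplices must be an exact set-theoretic identity, so that no transient discrepancy appears for small $t$, and the cone decomposition must be an exact partition, so that the generating function identity holds starting from $t = 0$ rather than merely for large $t$. Both are standard once the half-open decomposition is chosen carefully. Summing the resulting quasi-polynomials over the pieces of the triangulation, with period the least common multiple of the individual periods, then produces a quasi-polynomial expression for $f(t)$ holding on all of $\mathbb{Z}_{\geq 0}$.
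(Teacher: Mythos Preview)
Your argument is essentially the standard modern proof of Ehrhart's theorem (triangulate, lift to cones, read off quasi-polynomiality from the rational generating function), and it is correct as sketched. However, there is nothing to compare it against: the paper does not prove this theorem. It is stated in the introduction as a classical result, attributed to Ehrhart with a citation to \cite{Ehr}, and used only as motivation and historical context for the paper's actual results (Theorems \ref{GF}, \ref{SF}, \ref{RCF}, \ref{CW2}). The paper's own machinery---base-$t$ representations, the induction of Section \ref{S4}---is aimed at parametric integer programs with polynomial data, not at reproving Ehrhart's theorem.

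One minor comment on your write-up: when you reduce via inclusion--exclusion to simplices $\tau$ of various dimensions, the cone argument for a $k$-simplex with $k<n$ produces a cone that is not full-dimensional in $\mathbb{R}^{n+1}$, so the parallelepiped $\Pi$ has measure zero and the formula $\sigma_\Pi(z)/(1-z^d)^{n+1}$ needs the exponent $n+1$ replaced by $k+1$. This is harmless but worth stating explicitly. The half-open decomposition you mention avoids this bookkeeping entirely and is the cleaner route.
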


In fact, Macdonald showed that $f$ is meaningful for negative integers \cite{Mac}. The set $tP$ is the real vector set (see subsection 1.1) of a PILP, so this theorem states that the size function of a certain class of PILPs is EQP. Chen, Li, and Sam showed that this is true for all PILPs \cite{CLS}. We discuss their result later.

Woods proved that a certain class of parametric lattice point-counting problems defined by linear inequalities, boolean operations, and quantifiers has EQP behavior, even with more than one parameter \cite{Woo2}. The results in our paper are not true for more than one parameter. See the remark at the end of section \ref{S4}.

A common theme among these results is that certain operations on polynomials of one or more integer parameters can form other integer-valued functions with domain $\mathbb{Z}_+$ such as a GCD function or a size function which are EQP. In some cases, the function may even be quasi-polynomial with meaningful values for negative arguments. If the function is not integer-valued, like a quotient, then we do not expect anything. Our results follow this first pattern. 

We study the $\ell^{\text{th}}$ largest value attained by the objective function with multiplicity. By this, we mean the $\ell^{\text{th}}$ largest value in the multiset obtained by evaluating the objective function on the set. We prove the following theorems, which also define the structure of a PILP in general or standard form. In these theorems, we require a certain boundedness condition which many reasonable PILPs satisfy.

%
%

\begin{theorem}[General Form] \label{GF} Let $n$ and $m$ be positive integers. Let $\mathbf{c}$ be in $\mathbb{Z}[t]^n$, $A$ be in $\mathbb{Z}[t]^{m \times n}$, and $\mathbf{b}$ be in $\mathbb{Z}[t]^m.$

For all $t,$ let $R(t):=\{\mathbf{x} \in \mathbb{R}^n \mid A(t) \mathbf{x} \le \mathbf{b}(t)\},$ the set of real vectors that satisfy all constraints except being integer vectors. Let $L(t):=R(t) \cap \mathbb{Z}^n,$ the set of lattice points in $R(t).$ Assume that $R(t)$ is bounded for all $t.$ For all positive integers $\ell,$ let $f_\ell(t)$ be the $\ell^{\text{th}}$ largest value of $\mathbf{c}^{\intercal}(t) \mathbf{x}$ for $\mathbf{x}$ in $L(t)$ or $-\infty$ if $|L(t)|<\ell$. 

Then for all $\ell,$ $f_\ell$ is eventually quasi-polynomial.
\end{theorem}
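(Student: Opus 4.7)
My plan is to induct on $\ell$, with the base case $\ell = 1$ handled by the strengthened Calegari--Walker theorem the paper proves elsewhere. For $\ell = 1$, the maximum of a linear function on $R(t)$ is attained at a vertex of $\mathrm{conv}(L(t))$, and these vertices come from finitely many EQP families $v_1(t), \ldots, v_k(t)$. Thus $f_1(t) = \max_i \mathbf{c}^{\intercal}(t) v_i(t)$ is a pointwise maximum of finitely many EQPs, and since the difference of two polynomials is eventually single-signed, the max of two polynomials is eventually one of them; iterating residue class by residue class (with common period equal to the lcm of the individual ones) shows $f_1$ is EQP.

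For the inductive step, I would peel off the top level set. Let $n_1(t) := \#\{\mathbf{x} \in L(t) : \mathbf{c}^{\intercal}(t) \mathbf{x} = f_1(t)\}$ be the multiplicity of the maximum. Fix a residue class of $t$ modulo the period of $f_1$; there $f_1$ is polynomial, so $L_1(t) := \{\mathbf{x} \in L(t) : \mathbf{c}^{\intercal}(t) \mathbf{x} = f_1(t)\}$ is the lattice point set of a polytope whose defining data is polynomial in $t$, and the Chen--Li--Sam counting theorem gives that $n_1$ is EQP. Pass to a common refinement of the periods of $f_1$ and $n_1$. On each residue class both are polynomials, and since polynomials are eventually monotone, either $n_1(t) \geq \ell$ eventually---whence $f_\ell(t) = f_1(t)$, polynomial on the class---or $n_1(t)$ is eventually a constant $c$ with $1 \leq c < \ell$. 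In the latter case, $f_\ell(t)$ equals the $(\ell - c)$-th largest objective value on $L(t) \setminus L_1(t)$, which is the lattice point set of the bounded PILP $R(t) \cap \{\mathbf{x} : \mathbf{c}^{\intercal}(t) \mathbf{x} \leq f_1(t) - 1\}$. The inductive hypothesis, applied at $\ell - c < \ell$ to this smaller-parameter PILP, yields EQP behavior on the class, and stitching across classes yields EQP on $\mathbb{Z}_+$.

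The main obstacle is coupling the residue-class partition with the induction: the inductive hypothesis must be available for PILPs whose integer parameter ranges over an arithmetic progression. Reparametrizing each class $\{k d + r : k \in \mathbb{Z}_+\}$ by $k$ turns the restricted PILP into an honest PILP in the new parameter $k$, whose defining data is polynomial in $k$; the inductive hypothesis then applies verbatim, and an EQP in $k$ on such a class pulls back to an EQP in $t$ on that class. With this framework in place the remaining details---boundedness of the peeled polytope (inherited from $R(t) \supseteq R(t) \cap \{\mathbf{c}^{\intercal}(t)\mathbf{x} \leq f_1(t)-1\}$) and finiteness of the recursion (since $c \geq 1$ whenever $L(t)$ is nonempty)---are automatic.
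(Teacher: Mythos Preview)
Your approach is correct and takes a genuinely different route from the paper's main proof. The paper establishes Theorem~\ref{GF} by reducing to standard form (slack variables plus a polynomial shift into the nonnegative orthant), then to a ``reduced canonical form'' with constant $A$ and linear $\mathbf{b}$ via the base-$t$ digit decomposition of Chen--Li--Sam, and finally proves that case by strong induction on $\ell+n$: points within distance $\ell$ of some bounding hyperplane of $R(t)$ are covered by finitely many auxiliary PILPs living on parallel hyperplanes, each of which is affinely equivalent to a PILP with $n-1$ indeterminates. Your argument instead inducts on $\ell$ alone and works directly in general form, taking the strengthened Calegari--Walker result (Theorem~\ref{CW2}) as the $\ell=1$ base case and invoking the Chen--Li--Sam counting theorem to measure the multiplicity of the top level set before stripping it off with the single constraint $\mathbf{c}^\intercal(t)\mathbf{x}\le f_1(t)-1$. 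The paper in fact sketches a cousin of your idea in the remarks at the end of Section~5, though there for the reduced canonical form and peeling one vertex at a time; your use of the CLS count to handle multiplicity is the clean way to complete that sketch for values with multiplicity. There is no circularity: the paper's proof of Theorem~\ref{CW2} uses only the base-$t$ lemma and the original Calegari--Walker theorem, not Theorems~\ref{GF}, \ref{SF}, or \ref{RCF}. The trade-off is that the paper's main proof is essentially self-contained once the base-$t$ lemma is in hand, whereas yours outsources the heavy lifting to Theorem~\ref{CW2} and the CLS counting theorem, yielding a shorter top-level induction at the cost of relying on two substantial black boxes.
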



\begin{theorem}[Standard Form] \label{SF} Let $n$ and $m$ be positive integers. Let $\mathbf{c}$ be in $\mathbb{Z}[t]^n$, $A$ be in $\mathbb{Z}[t]^{m \times n}$, and $\mathbf{b}$ be in $\mathbb{Z}[t]^m.$

For all $t$, let  $R(t):=\{\mathbf{x} \in \mathbb{R}^n \mid \mathbf{x} \ge \mathbf{0} \wedge A(t) \mathbf{x} = \mathbf{b}(t)\},$ the set of real vectors that satisfy all constraints except being integer vectors. Let $L(t):=R(t) \cap \mathbb{Z}^n,$ the set of lattice points in $R(t).$ Assume that $R(t)$ is bounded for all $t.$  For all positive integers $\ell,$ let $f_\ell(t)$ be the $\ell^{\text{th}}$ largest value of $\mathbf{c}^{\intercal}(t) \mathbf{x}$ for $\mathbf{x}$ in $L(t)$ or $-\infty$ if $|L(t)|<\ell$. 

Then for all $\ell,$ $f_\ell$ is eventually quasi-polynomial.
\end{theorem}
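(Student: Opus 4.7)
The plan is to deduce Theorem \ref{SF} as an immediate corollary of Theorem \ref{GF}. Standard form can be converted into general form by the textbook transformation that replaces equalities and sign constraints by weak inequalities: the equality $A(t)\mathbf{x} = \mathbf{b}(t)$ is equivalent to the pair $A(t)\mathbf{x} \le \mathbf{b}(t)$ and $-A(t)\mathbf{x} \le -\mathbf{b}(t)$, while the nonnegativity constraint $\mathbf{x} \ge \mathbf{0}$ is equivalent to $-I_n \mathbf{x} \le \mathbf{0}$, where $I_n$ denotes the $n \times n$ identity.

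Concretely, I would stack these three blocks to build
\[
A'(t) := \begin{pmatrix} A(t) \\ -A(t) \\ -I_n \end{pmatrix} \in \mathbb{Z}[t]^{(2m+n) \times n}, \qquad \mathbf{b}'(t) := \begin{pmatrix} \mathbf{b}(t) \\ -\mathbf{b}(t) \\ \mathbf{0} \end{pmatrix} \in \mathbb{Z}[t]^{2m+n}.
\]
All entries of $A'$ and $\mathbf{b}'$ are still integer polynomials in $t$, so the triple $(\mathbf{c}, A', \mathbf{b}')$ is a legitimate instance of Theorem \ref{GF}. By construction the associated real region $R'(t) = \{\mathbf{x} \in \mathbb{R}^n : A'(t)\mathbf{x} \le \mathbf{b}'(t)\}$ is exactly the set $R(t)$ from the standard-form statement, so the lattice-point sets $L'(t) = L(t)$ and the objective values $\mathbf{c}^\intercal(t)\mathbf{x}$ agree pointwise. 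In particular the functions $f_\ell$ coincide, and the boundedness hypothesis on $R(t)$ transfers verbatim to the boundedness hypothesis for $R'(t)$.

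Applying Theorem \ref{GF} to $(\mathbf{c}, A', \mathbf{b}')$ then yields that $f_\ell$ is eventually quasi-polynomial, which is exactly the conclusion of Theorem \ref{SF}.

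The main obstacle in this argument is essentially nonexistent: the reduction is a formal rewriting of constraints that preserves polynomial dependence on $t$, integrality of all coefficients, the feasible set, and boundedness. All the genuine difficulty is concentrated in Theorem \ref{GF}, where one must actually understand the lattice structure of a parametrically varying polytope; Theorem \ref{SF} adds no new content beyond that, and no separate machinery is needed.
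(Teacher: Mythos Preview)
Your reduction is formally valid---rewriting $A(t)\mathbf{x}=\mathbf{b}(t)$ as two inequalities and $\mathbf{x}\ge\mathbf{0}$ as $-I_n\mathbf{x}\le\mathbf{0}$ does embed standard form into general form, and the paper itself remarks that this direction is ``clear'' at the opening of Section~\ref{S2}. The problem is that in this paper the logical dependence runs the other way: Section~\ref{S2} proves Theorem~\ref{GF} \emph{from} Theorem~\ref{SF} (via the canonical-form Theorem~\ref{CF}), not the reverse. So invoking Theorem~\ref{GF} to establish Theorem~\ref{SF} is circular here; the ``genuine difficulty'' you say is concentrated in Theorem~\ref{GF} is in fact discharged precisely by first proving Theorem~\ref{SF}.

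The paper's actual proof of Theorem~\ref{SF} is the content of Sections~3 and~4. Section~3 uses base-$t$ expansions of the indeterminates (Lemma~\ref{bt0} and Proposition~\ref{bt1}, following Chen--Li--Sam) to decompose a standard-form PILP into a disjoint union of finitely many PILPs in \emph{reduced canonical form} (constant matrix $A$, right-hand side $\mathbf{b}$ of degree $\le 1$), and shows that the $\ell^{\text{th}}$ optimum of the original can be read off from the first $\ell$ optima of the pieces (Lemma~\ref{fla} and Proposition~\ref{max}). Section~4 then proves the reduced-canonical-form Theorem~\ref{RCF} directly by induction on $\ell+n$: the $\ell^{\text{th}}$ optimum is attained within distance $\ell$ of some bounding hyperplane (Proposition~\ref{ctb}), so one slices the region into finitely many parallel-hyperplane sub-PILPs, each of which can be rewritten with one fewer indeterminate. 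Your proposal skips all of this machinery, but only by assuming a theorem whose proof in the paper rests on the very statement you are trying to establish.
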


\begin{remark}By definition, A PILP in canonical form is a PILP in general form which includes the constraints $\mathbf{x} \ge \mathbf{0}.$ The standard and canonical forms are special cases of the general form because $A(t) \mathbf{x} = \mathbf{b}(t)$ is equivalent to the conjunction of $A(t) \mathbf{x} \le \mathbf{b}(t)$ and $-A(t) \mathbf{x} \le -\mathbf{b}(t)$\end{remark}

The constant function at $-\infty$ is to be interpreted as a polynomial. The general form and standard form are closely related, as we prove in the next section. Both forms of PILPs are considered because in general, it is easier to formulate a problem as a PILP in general form, but the reduction in Section 3 is more convenient in standard form.

A result by Woods, Theorem 3.5(a), Property 3a in \cite{Woo}, can be seen as a special case of the above theorems. Woods proved that if we restrict the objective function $\mathbf{c}$ to be constant, then $f_1$ is eventually quasi-polynomial.

Chen, Li, and Sam proved (as Theorems 1.1 and 2.1 in \cite{CLS}), that for a PILP in general form, the cardinality of $L(t)$ as a function of $t$ is eventually quasi-polynomial, which is a generalization of Ehrhart's theorem. They used base $t$ representations to reduce the problem to the case when $A$ is in $\mathbb{Z}^{m \times n}$ and the coordinates of $\mathbf{b}$ have degree at most 1. We show that the same idea applies when considering the $\ell^{\text{th}}$ largest value.

It is likely that in many cases, the components of the EQP $f_\ell,$ or the polynomials $P_0, \ldots, P_{d-1}$ shown in the definition, have the same degree. However, we do not discuss this in this paper. We also do not find explicit bounds on the period of $f_\ell,$ or $d$ from the definition, or an explicit integer $N$ such that $f_\ell(t)=P_{t \pmod{d}}(t)$ for $t>N.$ 

The motivation for pursuing the results in this paper was a parametrized version of the Frobenius problem in which the arguments are integer valued polynomials. See \cite{Shen}. Theorem \ref{GF} and related ideas may be useful in studying a variety of parametric combinatorics problems whose answers are suspected to be eventually quasi-polynomial such as those in \cite{Woo}.

Closely related to the idea of finding an optimum value is finding the vertices of the convex hull of $L(t)$. This is because the optimum value of a linear objective function is attained at some vertex of the convex hull of the feasible set. Calegari and Walker showed that if the vertices of $R(t)$ are of size $O(t),$ then the vertices of the convex hull of $L(t)$ have eventually quasi-polynomial structure \cite{CW}. See theorem \ref{CW}. We show that this is true without the $O(t)$ assumption as a consequence of the base $t$ method.

\subsection{Notation}
In this paper, relations between vectors are coordinate-wise. The variables $t$ and $\ell$ are always positive integers. The phrase $t \gg 0$ means ``for sufficiently large $t.$" The constant function at $-\infty$ is to be interpreted as a polynomial. PILP means ``parametric integer linear program." EQP means ``eventually quasi-polynomial" or ``eventual quasi-polynomial." Magnitude and distance for vectors refer to the Euclidean metric. By $\ell^{\text{th}}$ largest value, we mean the $\ell^{\text{th}}$ largest value with multiplicity.

By $\mathbf{x},$ we mean the vector $(x_1, \ldots, x_n).$ A parametric inequality has the form $\mathbf{a}^{\intercal}(t) \mathbf{z} \le b(t),$ where $\mathbf{a}$ is a $\mathbb{Z}[t]$-vector of the correct dimension, $b$ is in $\mathbb{Z}[t],$ and $\mathbf{z}$ refers to the vector of indeterminates e.g. $\mathbf{x},$ and a parametric equation is the same with equality. 

We say that the PILP in general or standard form is defined by $n, m, A, \mathbf{b},$ and $\mathbf{c}$ as in Theorems \ref{GF} and \ref{SF}. We call $R(t)$ the real vector set of the PILP and $L(t)$ the lattice point set. We may also call these sets regions. We use $\{f_\ell\}_\ell$ or $\{f_\ell\}$ to denote the family of optimum value functions.

\section{Proof that Theorem \ref{SF} implies Theorem \ref{GF}}
\label{S2}

The other direction is clear because one parametric equation can be written as two parametric inequalities.

First, we prove an intermediate result, which is the EQP property for PILPs in canonical form. In this form, we require all indeterminates to be nonnegative. 

\begin{theorem}[Canonical Form] \label{CF} Let $n$ and $m$ be positive integers. Let $\mathbf{c}$ be in $\mathbb{Z}[t]^n$, $A$ be in $\mathbb{Z}[t]^{m \times n}$, and $\mathbf{b}$ be in $\mathbb{Z}[t]^m.$

For all $t,$ let $R(t):=\{\mathbf{x} \in \mathbb{R}^n \mid \mathbf{x} \ge 0 \wedge A(t) \mathbf{x} \le \mathbf{b}(t)\},$ the set of real vectors that satisfy all constraints except being integer vectors. Let $L(t):=R(t) \cap \mathbb{Z}^n,$ the set of lattice points in $R(t).$ Assume that $R(t)$ is bounded for all $t.$ For all positive integers $\ell,$ let $f_\ell(t)$ be the $\ell^{\text{th}}$ largest value of $\mathbf{c}^{\intercal}(t) \mathbf{x}$ for $\mathbf{x}$ in $L(t)$ or $-\infty$ if $|L(t)|<\ell$. 

Then for all $\ell,$ $f_\ell$ is eventually quasi-polynomial.
\end{theorem}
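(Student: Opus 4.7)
The plan is to reduce Theorem \ref{CF} to Theorem \ref{SF} by the standard slack-variable construction, adapted to the parametric setting. Given a canonical form PILP with data $(n,m,A,\mathbf{b},\mathbf{c})$, I introduce $m$ new indeterminates $\mathbf{s} = (s_1,\ldots,s_m)$ and form the standard form PILP with $n+m$ variables $(\mathbf{x},\mathbf{s})$, constraint matrix $A'(t) = [\,A(t)\;\;I_m\,]$, right-hand side $\mathbf{b}'(t) = \mathbf{b}(t)$, and objective vector $\mathbf{c}'(t) = (\mathbf{c}(t),\mathbf{0})$. Since $A, \mathbf{b}, \mathbf{c}$ have entries in $\mathbb{Z}[t]$, so do $A', \mathbf{b}', \mathbf{c}'$, so this is indeed a PILP in standard form.

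Next, I would verify the key properties of this reduction. For every $t$, the map $\mathbf{x} \mapsto (\mathbf{x},\mathbf{b}(t)-A(t)\mathbf{x})$ is a bijection between the canonical form lattice point set $L(t)$ and the standard form lattice point set $L'(t)$: integrality of $\mathbf{s} = \mathbf{b}(t) - A(t)\mathbf{x}$ follows because $A(t)$ and $\mathbf{b}(t)$ have integer entries when $t$ is a positive integer, the constraint $\mathbf{s} \ge \mathbf{0}$ is exactly the inequality $A(t)\mathbf{x} \le \mathbf{b}(t)$, and the inverse is projection to the first $n$ coordinates. Under this bijection the objective value is preserved: $\mathbf{c}'(t)^\intercal (\mathbf{x},\mathbf{s}) = \mathbf{c}(t)^\intercal \mathbf{x}$.

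I also need to check that the boundedness hypothesis in Theorem \ref{SF} holds. If the canonical $R(t)$ is bounded, then so is the standard $R'(t)$, because $\mathbf{s}$ is a continuous (affine) function of $\mathbf{x}$ with $t$ fixed, so the image of a bounded set remains bounded. Hence the hypotheses of Theorem \ref{SF} are satisfied by the reduced PILP.

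Finally, applying Theorem \ref{SF} to the standard form PILP produces EQP functions $f'_\ell$ giving the $\ell^{\text{th}}$ largest values of $\mathbf{c}'(t)^\intercal (\mathbf{x},\mathbf{s})$ over $L'(t)$. By the bijection and objective preservation, $f_\ell(t) = f'_\ell(t)$ for every $\ell$ and every $t$, so $f_\ell$ is EQP. There is no serious obstacle in this argument; essentially everything reduces to the elementary observation that the slack variables add no new degrees of freedom and preserve both integrality and boundedness in the parametric setting.
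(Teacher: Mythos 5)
Your proof is correct and follows essentially the same route as the paper: introduce $m$ slack variables to convert the canonical-form constraints $A(t)\mathbf{x}\le\mathbf{b}(t)$ into the equality $A(t)\mathbf{x}+\mathbf{s}=\mathbf{b}(t)$ with $\mathbf{s}\ge\mathbf{0}$, verify that the map $\mathbf{x}\mapsto(\mathbf{x},\mathbf{b}(t)-A(t)\mathbf{x})$ is an objective-preserving bijection between lattice point sets (using integrality of $A(t),\mathbf{b}(t)$ and the already-present constraint $\mathbf{x}\ge\mathbf{0}$), check boundedness, and apply Theorem~\ref{SF}. The only cosmetic difference is that the paper gives an explicit numerical bound for $R^*(t)$ where you invoke continuity of the affine map; both are fine.
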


Assume Theorem \ref{SF}. To prove Theorem \ref{CF}, we use the standard method of introducing slack variables. Let the PILP in canonical form be given by positive integers $n$ and $m,$ $\mathbf{c}$ in $\mathbb{Z}[t]^n$, $A$ in $\mathbb{Z}[t]^{m \times n}$, and $\mathbf{b}$ in $\mathbb{Z}[t]^m$ which defines optimum value functions $\{f_\ell\},$ regions $R(t)$ and $L(t)$ which are bounded for all $t.$ Consider the PILP in standard form with indeterminates $\mathbf{y}=(y_1, \ldots, y_{n+m}).$ Let $\mathbf{y^1} = (y_1, \ldots, y_n),$ $\mathbf{y^2} = (y_{n+1}, \ldots, y_{n+m}),$ and let the constraints be $A(t) \mathbf{y^1} + \mathbf{y^2} = \mathbf{b}(t)$ and $y_i \in \mathbb{Z}_{\ge 0}$ for all $i.$ The former set of constraints can easily be written as a parametric equation of $\mathbf{y}.$ Let the objective function be $\mathbf{c}^{\intercal}(t)\mathbf{y_1},$ which can be written as a polynomial covector times $\mathbf{y}.$ Let $R^*(t)$ and $L^*(t)$ be the regions defined by the second PILP and $\{f^*_\ell\}$ be the optimum value functions.

To use Theorem \ref{SF}, we wish to show that $R^*(t)$ is bounded for each $t.$ By assumption, $R(t)$ is bounded. As a subset of $\mathbb{R}^n,$ $R(t)$ is bounded by some box $[-M, M]^n$ where $M$ is constant (for fixed $t$). Let $\mathbf{y}$ be in $R^*(t).$ The first $n$ coordinates of $y$ lie in $R(t),$ so these coordinates lie in $[-M,M].$ When $k$ is an integer in $[n+1, n+m],$ $y_k= (\mathbf{b}(t) - A(t) (y_1, \ldots, y_n))_{k-n}.$ For fixed $t,$ the entries of $\mathbf{b}(t)$ and $A(t)$ are bounded in magnitude by some constant $N.$ It follows that $|y_k| \le N+nNM$ and that $R^*(t)$ is bounded for all $t.$ By Theorem \ref{SF}, for each $\ell,$ $f^*_\ell$ is EQP. 

There is an obvious bijection between the lattice point sets: if $(a_1, \ldots, a_n)$ lies in $L(t),$ then $(a_1, \ldots, a_{n+m})$, where $a_k$ (for $n+1 \le k \le n+m$) is defined as $(\mathbf{b}(t) - A(t) (a_1, \ldots, a_n))_{k-n}$, lies in $L^*(t)$. The inverse map is just ignoring the last $m$ coordinates, and it maps $L^*(t)$ to $L(t).$ Therefore, the two sets have the same cardinality for all $t.$ Evaluating the objective function commutes with the bijection by construction, so for all $\ell$ and $t,$ $f_\ell(t)=f_\ell^*(t).$

\begin{remark} If the PILP in canonical form were instead in general form, then the proposed bijection from $L(t)$ to $L^*(t)$ would not be a bijection in general because $L(t)$ may contain a point $(a_1, \ldots, a_n)$ with negative coordinates whereas all points in $L^*(t)$ have positive coordinates. \end{remark}

Now, we prove Theorem \ref{GF}. The idea is that for a PILP in general form (with the boundedness assumption), there is a uniform polynomial bound on the coordinates of feasible points.  This is essentially a standard result about magnitudes of feasible points of bounded integer linear programs. 

\begin{proposition} \label{bounded} 
For a PILP in general form, there exists a positive integer $r$ such that for $t \gg 0$, all coordinates of any point $\mathbf{x}$ in $L(t)$ have magnitude less than $t^r.$ 
 \end{proposition}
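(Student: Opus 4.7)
The plan is to bound coordinates of vertices of $R(t)$ as rational functions of $t$ via Cramer's rule, and then observe that any point of the bounded polytope $R(t)$ is a convex combination of vertices, so its coordinates cannot exceed the largest vertex coordinate in absolute value.

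More concretely, I would first note that for each $t$, the set $R(t)$ is a bounded polyhedron in $\mathbb{R}^n$, hence either empty or a polytope equal to the convex hull of its finitely many vertices. Every vertex $\mathbf{v}$ of $R(t)$ is the unique solution of a system $A'(t)\mathbf{v}=\mathbf{b}'(t)$, where $A'(t)$ is a nonsingular $n\times n$ submatrix of $A(t)$ consisting of rows corresponding to $n$ tight constraints, and $\mathbf{b}'(t)$ is the corresponding subvector of $\mathbf{b}(t)$. There are at most $\binom{m}{n}$ such subsystems, independent of $t$.

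For each such index set $S$, $\det(A'_S(t))$ is a polynomial in $\mathbb{Z}[t]$. Either it is identically zero (in which case the system never contributes a vertex and we discard it), or it is a nonzero polynomial that has only finitely many integer roots. By Cramer's rule, when $\det(A'_S(t))\neq 0$ the coordinates of the solution are $\det((A'_S)_j(t))/\det(A'_S(t))$, each a rational function in $\mathbb{Q}(t)$. Any nonzero rational function $P/Q$ satisfies $|P(t)/Q(t)| \le t^{r_S}$ for some integer $r_S$ once $t$ is sufficiently large. Taking $r$ to be the maximum of $r_S$ over the finitely many contributing subsystems (and increasing $r$ by one to absorb a constant, if needed), we get that for $t\gg 0$, every coordinate of every vertex of $R(t)$ has magnitude at most $t^r$.

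Finally, for any $\mathbf{x}\in R(t)\supseteq L(t)$, write $\mathbf{x}=\sum_i \lambda_i \mathbf{v}_i$ as a convex combination of vertices, so $|x_j|\le \sum_i \lambda_i |v_{i,j}|\le \max_i |v_{i,j}| \le t^r$, proving the proposition. The only mild obstacle is the uniformity over $t$: one must be sure that the finitely many ``bad'' $t$'s (those for which some relevant $\det(A'_S(t))$ vanishes but the subsystem still defines a vertex via a different tight-row selection) are handled by another choice of $S$, which is automatic since any vertex at any $t$ arises from at least one nonsingular $n$-tuple of tight rows, and by the pigeonhole on the finite collection of subsystems we may restrict attention to those with $\det(A'_S)\not\equiv 0$ when taking $t$ large.
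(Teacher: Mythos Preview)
Your argument is correct and follows essentially the same route as the paper: bound the coordinates of the vertices of $R(t)$ via Cramer's rule on an $n\times n$ subsystem of tight constraints, then pass to all of $R(t)$ by convexity. The one technical difference is in how the denominator is handled. You treat each vertex coordinate as a rational function $P_S(t)/Q_S(t)$ and appeal to asymptotics of rational functions to get $|P_S(t)/Q_S(t)|\le t^{r_S}$ for $t\gg 0$; this is valid, and your last paragraph correctly observes that any vertex at any $t$ comes from some $S$ with $\det A'_S\not\equiv 0$, so only finitely many rational functions are in play. The paper instead exploits that for integer $t$ the submatrix $D=A'_S(t)$ has integer entries, so $|\det D|\ge 1$ whenever it is nonzero; this immediately gives the explicit polynomial bound $|v_i|\le n(n-1)!\,\alpha(t)^{n-1}\beta(t)$ valid at every $t$, with no need to discuss which subsystems are active or to pass to $t\gg 0$ at this stage. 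Your approach is slightly more general (it would work for $A\in\mathbb{Q}[t]^{m\times n}$), while the paper's integrality trick is cleaner and avoids the uniformity discussion you needed at the end.
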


\begin{proof}
Fix $t,$ and consider the single region $R(t)$ (forget the parametric structure). By assumption, $R(t)$ is a bounded convex polytope, so the extreme values of the coordinates occur at vertices. The region $R(t)$ is defined by finitely many bounding hyperplanes, and a vertex $v$ of $R(t)$ is the unique intersection of the bounding hyperplanes on which it lies. In general, among a set of hyperplanes in $\mathbb{R}^n$ which intersect at a single point, some $n$ of those hyperplanes intersect at a single point. Such $n$ can be found by choosing a basis for the space spanned by the dual vectors to the hyperplanes. Therefore, the vertex $v$ is the unique intersection of some $n$ bounding hyperplanes to $R(t).$

We now proceed similarly to Lemma 2.1 on page 30 of \textit{Combinatorial Optimization}\cite{PS}. The $n$ hyperplanes correspond to an $n \times n$ minor of $A(t),$ call it $D.$ Let $\alpha(t)$ be the largest magnitude of a matrix element of $A(t),$ and let $\beta(t)$ be the largest magnitude of an element of $\mathbf{b}(t).$

Each coordinate of $v$ is the sum of $n$ products of elements of $D^{-1}$ by elements of $\mathbf{b}(t).$ By the cofactor description of matrix inverses, each element of $D^{-1}$ is equal to an $(n-1) \times (n-1)$ determinant divided by a nonzero $n \times n$ determinant. Since $A(t)$ is an integer matrix, this denominator has absolute value at least $1.$ By the Leibniz formula, the numerator is the sum of $(n-1)!$ products of $(n-1)$ matrix elements of $D,$ which are in turn matrix elements of $A.$ Therefore, we have
\[ |v_i| \le n (n-1)! \alpha(t)^{n-1} \beta(t).\]

This formula is true for all $t.$ It is easy to prove that $\alpha(t)$ and $\beta(t)$ are dominated by suitably chosen polynomials of $t$, and likewise, the right hand side is dominated by a suitably chosen power of $t,$ $t^r.$
\end{proof}

Consider $Q,$ a PILP in general form. Assume Theorem \ref{SF}. Let $r$ be an integer guaranteed by Proposition \ref{bounded}. It is straightforward to construct a new PILP, $Q',$ in general such that for all $t,$ the new lattice point set is exactly the old lattice point set translated by $(+t^r, +t^r, \ldots, +t^r)^{\intercal}.$ Therefore, by adding the constraints $\mathbf{x} \ge 0$ to the new PILP, we get a PILP, $Q''$, in canonical form, and the optimum value functions are unchanged for $t \gg 0.$ By Theorem \ref{CF}, the optimum value functions of $Q''$ are EQP. The same is true for $Q'.$ The optimum value functions of $Q'$ and $Q$ differ by $\mathbf{c}(t) (+t^r, +t^r, \ldots, +t^r)^{\intercal}$ or are $-\infty$ in the same places, so the optimum value functions of $Q$ are EQP as well. Therefore,  Theorem \ref{SF} implies Theorem \ref{GF}.

\section{Reduction using base $t$ representations}

In this section, we show that Theorem \ref{SF} can be reduced to the case of a PILP in canonical form in which the matrix has constant entries and the vector on the right hand side has entries which have degree at most $1$. The main idea of this reduction is to express all indeterminates $x_i$ in base $t.$ This idea is taken from Chen et. al. \cite{CLS}, in which the authors used this method to show that the cardinality of $L(t)$ is eventually quasi-polynomial. It is partially reproduced here with adjustments for considering optimum value functions. 

\begin{theorem}[Reduced canonical form] \label{RCF}
Let $n$ and $m$ be positive integers. Let $\mathbf{c}$ be in $\mathbb{Z}[t]^n$, $A$ be in $\mathbb{Z}^{m \times n}$, and $\mathbf{b}$ be in $\left(\mathbb{Z}t + \mathbb{Z}\right)^m.$

For all $t$, let $R(t):=\{\mathbf{x} \in \mathbb{R}^n \mid \mathbf{x} \ge \mathbf{0} \wedge A(t) \mathbf{x} \le \mathbf{b}(t)\},$ the set of real vectors that satisfy all constraints except being integer vectors. Let $L(t):=R(t) \cap \mathbb{Z}^n,$ the set of lattice points in $R(t).$ Assume that $R(t)$ is bounded for all $t.$ For all $\ell,$ let $f_\ell(t)$ be the $\ell^{\text{th}}$ largest value of $\mathbf{c}^{\intercal}(t) \mathbf{x}$ for $\mathbf{x}$ in $L(t)$ or $-\infty$ if $|L(t)|<\ell$. 

Then for all $\ell,$ $f_\ell$ is eventually quasi-polynomial.
\end{theorem}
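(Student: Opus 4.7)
The plan is to exploit the special structure of reduced canonical form: with $A$ constant and $\mathbf{b}(t)$ of degree at most one, every vertex of $R(t)$ is the unique solution of some system $A'\mathbf{x}=\mathbf{b}'(t)$ in which $A'$ is an invertible $n\times n$ matrix built from $n$ rows taken from $A$ together with rows $-\mathbf{e}_i^\intercal$ enforcing $x_i=0$, and $\mathbf{b}'(t)$ is the corresponding subvector of $\mathbf{b}(t)$ padded by zeros. By Cramer's rule every coordinate of every vertex of $R(t)$ is a rational affine function of $t$; in particular all vertices of $R(t)$ have size $O(t)$. The theorem of Calegari and Walker (Theorem~\ref{CW}) therefore applies and produces finitely many vector-valued EQP functions $\mathbf{w}_1(t),\ldots,\mathbf{w}_K(t)$ whose values exhaust the vertex set of $\mathrm{conv}(L(t))$ for $t\gg 0$.

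First I would dispose of $\ell=1$. Because a linear functional attains its maximum over $\mathrm{conv}(L(t))$ at a vertex, $f_1(t)=\max_i \mathbf{c}(t)^\intercal \mathbf{w}_i(t)$. Restricting to an arithmetic progression $t\equiv r\pmod{d}$, each $\mathbf{c}(t)^\intercal \mathbf{w}_i(t)$ is eventually polynomial in $t$, and the pointwise maximum of finitely many polynomials is eventually equal to one of them (by comparing leading terms). Letting $r$ range over residues shows that $f_1$ is EQP.

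For $\ell\ge 2$ I would induct on $\ell$. Fix an arithmetic progression; by the inductive hypothesis, $f_{\ell-1}(t)$ coincides with a polynomial $P(t)$ on this progression. Since $\mathbf{c}(t)^\intercal \mathbf{x}\in\mathbb{Z}$ for $\mathbf{x}\in\mathbb{Z}^n$, either $f_\ell(t)=P(t)$ or $f_\ell(t)\le P(t)-1$. By Chen, Li, and Sam~\cite{CLS}, the count $|\{\mathbf{x}\in L(t):\mathbf{c}(t)^\intercal\mathbf{x}\ge P(t)\}|$---a lattice count of a parametric polytope with polynomial data---is EQP in $t$, and comparing its value with $\ell$ on the progression determines for $t\gg 0$ which of the two cases applies. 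In the first case $f_\ell(t)=P(t)$ is EQP directly. In the second case, $f_\ell(t)$ equals the $f_1$-value of the PILP obtained by adjoining the single linear constraint $\mathbf{c}(t)^\intercal \mathbf{x}\le P(t)-1$ to the original RCF instance; feeding this augmented problem through the base-$t$ reduction developed in this section produces a finite family of reduced canonical instances, to each of which the $\ell=1$ result above applies, closing the induction.

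The main obstacle will be keeping the induction clean. The augmented PILP in the inductive step has polynomial coefficients of potentially high degree, so one must verify that pushing it through the base-$t$ reduction preserves the arithmetic progression on which one is working while still producing bona fide RCF instances. Since the reduction is a deterministic procedure and only the $\ell=1$ case of Theorem~\ref{RCF} is invoked per progression, no circularity arises. A subsidiary point is that when the optimal face of $R(t)$ is positive-dimensional the multiplicity of the top value grows polynomially in $t$, but this is handled automatically by the Chen--Li--Sam count above, which correctly matches the rank $\ell$ to the appropriate level $P(t)$.
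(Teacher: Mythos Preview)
Your approach is correct and takes a genuinely different route from the paper's main proof. The paper proceeds by strong induction on $\ell+n$: Proposition~\ref{ctb} shows that $f_\ell(t)$ is determined by the lattice points within Euclidean distance $\ell$ of some bounding hyperplane of $R(t)$; Proposition~\ref{ht} partitions those points according to which of finitely many parallel hyperplanes they lie on; and Proposition~\ref{p5} converts each such slice into a reduced canonical PILP with $n-1$ indeterminates, so the inductive hypothesis applies. No external black boxes enter. Your argument instead imports Calegari--Walker (Theorem~\ref{CW}) for the base case $\ell=1$ and the Chen--Li--Sam counting theorem for the inductive step, invoking only the already-established $\ell=1$ case of general form on the augmented problem. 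You are right that there is no circularity, since the $\ell=1$ case of Theorem~\ref{RCF} feeds through Sections~2 and~3 to give the $\ell=1$ case of Theorem~\ref{GF}, which is all the inductive step needs. One point worth making explicit: in your ``second case'' the count $N(t)$ is exactly $\ell-1$, because $N(t)\ge\ell-1$ automatically from the definition of $f_{\ell-1}$; this is why cutting by $\mathbf{c}(t)^{\intercal}\mathbf{x}\le P(t)-1$ leaves $f_\ell$ as the new \emph{maximum} rather than some $f_j$ with $1<j<\ell$. Your strategy is essentially the alternative the paper itself sketches in the Remark at the end of Section~5 (the paper removes one vertex at a time rather than one level at a time). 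The trade-off is that the Section~4 argument is self-contained, while yours is shorter but leans on \cite{CW} and \cite{CLS}.
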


We prove Theorem \ref{RCF} in the next section. The rest of this section is a proof that Theorem $\ref{RCF}$ implies Theorem $\ref{SF},$ so we are assuming the former.
 The first main part of this section recalls Lemma 3.2 of \cite{CLS}, which is the core of the base $t$ method; it decomposes PILPs in standard form into PILPs in canonical form. Chen et. al. only stated their lemma for PILPs in standard form with $m = 1,$ but it is easy to strengthen their lemma to $m>1,$ and we outline the steps needed here. The second main part of this section shows how their lemma applies to optimum value functions.

Lemma 3.2 of \cite{CLS} essentially states the following.

\begin{lemma}[Lemma 3.2 of \cite{CLS}]\label{bt0}
Let $n$ and $r$ be positive integers. Let $m = 1.$ Let $A$ be in $\mathbb{Z}[t]^{m \times n}$, and let $\mathbf{b}$ be in $\mathbb{Z}[t]^m$ with positive leading coefficient. Let $L(t)$ be the set 
\[\{\mathbf{x} \in \mathbb{Z}^n \mid 0 \le x_i < n^{r} \wedge A(t) \mathbf{x} = \mathbf{b}(t)\}.\]
Let $y_{i,j}$ for $1 \le i \le n, 0 \le j < r$ be indeterminates, and let $\mathbf{y}$ refer to the $y_{i,j}$ collectively. Define the map $\varphi_t: \{0, \ldots, t^r-1\}^n \rightarrow \{0, \ldots, t-1\}^{rn}$ by $\varphi_t(\mathbf{x}) = \mathbf{y}$ such that for $1 \le i \le n,$ 
\[ x_i = \sum_{j=0}^{r-1} y_{i, j} t^j.\]
Then there exist finitely ``PILPs''\footnote{We are ignoring the objective function and just focusing on the lattice point sets.} $Q_{\alpha}$ in reduced canonical form with following properties. Each $Q_{\alpha}$ has indeterminates $(y_{i,j}),$ includes the constraints $y_{i,j} < t,$ and has lattice point set $L_{\alpha}(t).$ For $t \gg 0,$ $\varphi_t$ is (restricts to) a bijection from $L(t)$ to the disjoint union of $L_{\alpha}(t).$
\end{lemma}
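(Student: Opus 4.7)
The plan is to substitute the base-$t$ expansion directly into the single equation $A(t)\mathbf{x} = \mathbf{b}(t)$, rewrite the resulting identity digit by digit with carry variables, and observe that the carries only take finitely many integer values independent of $t$. Write $A(t) = (a_1(t), \ldots, a_n(t))$ with $a_i(t) = \sum_k a_{i,k} t^k$ and $\mathbf{b}(t) = b(t) = \sum_p b_p t^p$. Substituting $x_i = \sum_{j=0}^{r-1} y_{i,j} t^j$ rearranges the left hand side as $\sum_p s_p(\mathbf{y})\, t^p$, where
\[ s_p(\mathbf{y}) := \sum_{i=1}^n \sum_{k+j=p} a_{i,k}\, y_{i,j} \]
is a $\mathbb{Z}$-linear form in the $y_{i,j}$ whose coefficients do not depend on $t$. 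Let $P$ be the largest degree appearing on either side.

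Since at a fixed $t$ the equation is a single scalar identity rather than a polynomial identity, one cannot simply match coefficients. Instead I would introduce integer carry variables $c_{-1} := 0,\ c_0, c_1, \ldots, c_{P-1},\ c_P := 0$ and demand
\[ s_p(\mathbf{y}) + c_{p-1} = b_p + t\, c_p \qquad (0 \le p \le P). \]
Multiplying the $p$th equation by $t^p$ and summing telescopes to $\sum_p s_p(\mathbf{y}) t^p = \sum_p b_p t^p = b(t)$, so when the $y_{i,j}$ are the base-$t$ digits of $\mathbf{x}$, the full set of carry equations is equivalent to $A(t)\mathbf{x} = b(t)$. The constraints $0 \le y_{i,j} < t$ force $|s_p(\mathbf{y})| \le K t$ for a constant $K$ depending only on $n$, $r$, and $A$; inducting along the recursion then bounds $|c_p| \le M$ for a constant $M$ independent of $t$. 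Hence the admissible carry tuples $\alpha := (c_0, \ldots, c_{P-1})$ lie in a fixed finite set $\mathcal{C}$.

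For each $\alpha \in \mathcal{C}$ I would define $Q_\alpha$ to be the PILP in the $y_{i,j}$ with the constraints $0 \le y_{i,j}$, $y_{i,j} \le t - 1$, and $s_p(\mathbf{y}) = b_p + t c_p - c_{p-1}$ for $0 \le p \le P$, each equation written as two parametric inequalities. Because each $s_p$ has constant integer coefficients and each right-hand side lies in $\mathbb{Z}t + \mathbb{Z}$, this PILP is in reduced canonical form with lattice point set $L_\alpha(t)$. The bijection then goes as follows: given $\mathbf{x} \in L(t)$, the digits $\mathbf{y} = \varphi_t(\mathbf{x})$ are uniquely determined, running the recursion on $\mathbf{y}$ produces a unique $\alpha$, and the original equation forces $\mathbf{y} \in L_\alpha(t)$. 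Conversely any $\mathbf{y} \in L_\alpha(t)$ satisfies $0 \le y_{i,j} < t$, so the $x_i := \sum_j y_{i,j} t^j$ satisfy $0 \le x_i < t^r$ and, by telescoping the carry equations, $A(t)\mathbf{x} = b(t)$, placing $\mathbf{x}$ in $L(t)$.

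The main obstacle I expect is handling the endpoints of the recursion correctly: the top carry $c_P$ must genuinely vanish so that no phantom term of degree greater than $P$ appears on the left, and the bottom carry $c_{-1}$ must vanish to match the constant term; both boundary conditions must be built into $Q_\alpha$ without excluding valid $\mathbf{y}$'s or admitting spurious ones. Once $P$ is chosen large enough that $s_p \equiv 0$ and $b_p = 0$ for $p > P$, and $t$ is large enough relative to the carry bound $M$, the disjoint-union bijection holds, which is why the statement is restricted to $t \gg 0$.
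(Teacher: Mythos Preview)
Your proposal is correct and follows essentially the same carry-digit approach as the Chen--Li--Sam proof that the paper cites; the paper itself does not reprove the lemma but, in its sketch of Proposition~\ref{bt1}, describes exactly your mechanism of reading the equation modulo $t$ degree by degree and indexing the resulting PILPs by the finitely many possible ``differences'' (your carries $c_p$). The only point worth making explicit in your write-up is that when you run the recursion $c_p = (s_p(\mathbf{y}) + c_{p-1} - b_p)/t$ starting from $c_{-1}=0$, each $c_p$ is automatically an \emph{integer} whenever $A(t)\mathbf{x}=b(t)$ holds (by reducing the truncated identity modulo $t$), and that the telescoping forces the terminal carry $c_P$ to vanish; you allude to both facts but stating them cleanly removes any doubt that $\varphi_t(L(t))$ genuinely lands in exactly one $L_\alpha(t)$.
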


\begin{remark} In other words, $L(t)$ is the lattice point set of a PILP in standard form with $m = 1,$ and $r$ is an integer guaranteed to exist by Proposition \ref{bounded}. 
\end{remark}

\begin{remark} It is clear that $\varphi_t$ is a well-defined function and bijection from $ \{0, \ldots, t^r-1\}^n$ to $\{0, \ldots, t-1\}^{rn}.$ The inverse map $\varphi_t^{-1}$ is a bijection and also an affine transformation.
\end{remark}
%

We refer readers to their paper for the proof, along with Example 3.5 from their paper. Chen et. al. do not explicitly state that the sets $L_{\alpha}(t)$ are disjoint, but this is easily proven.\footnote{Near the end of the proof of Lemma 3.2, Chen et. al. state that for their explicit example, one of the PILPs $Q_{\alpha}$ is given by the data $(C_0, C_1) = (0,2)$ and the three equations $2 x_{10} + x_{20} = n - 5,$ etc, and the possible indexing values $\alpha$ are essentially the possible values of $(C_0, C_1).$ Suppose, for contradiction, that a point $(x_{ij})$ were in multiple sets $L_{\alpha}(n).$ Then there are multiple values of $(C_0, C_1)$ that make the three equations true. Assuming that $n > 0,$ the first equation implies that there is only one possible value of $C_0,$ and then the second equation implies that there is only one possible value of $C_1,$ a contradiction. The general case is similar.}

Chen et. al. also address PILPs in standard form with $m>1$ informally in the proof of Theorem 1.1 at the end of page 9. They argue that we can iteratively apply Lemma \ref{bt0} to reduce the number of constraints which have degree greater than $1.$ Eventually, all constraints will have degree at most $1$. It may be conceptually easier to allow $m>1$ in the first place when stating and proving Lemma \ref{bt0}. 

\begin{proposition}\label{bt1} Lemma \ref{bt0} is also true for $m>1.$ \end{proposition}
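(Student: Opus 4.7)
The plan is to iterate the $m = 1$ case of Lemma \ref{bt0}. Given a PILP in standard form with $m$ parametric equations $A_k(t) \mathbf{x} = b_k(t)$ for $1 \le k \le m$ and variables bounded by $t^r$, first apply Lemma \ref{bt0} to the first equation alone. This produces a disjoint decomposition of $\{\mathbf{x} \in \mathbb{Z}^n : 0 \le x_i < t^r,\ A_1(t) \mathbf{x} = b_1(t)\}$ into a finite union of reduced canonical form PILPs on new variables $\mathbf{y} = (y_{i,j})$, $0 \le y_{i,j} < t$, via the base $t$ substitution $\varphi_t$ with $x_i = \sum_{j=0}^{r-1} t^j y_{i,j}$. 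Under $\varphi_t$, each remaining equation $A_k(t) \mathbf{x} = b_k(t)$ (for $k \ge 2$) becomes a single polynomial equation $\sum_{i,j} a_{ki}(t) t^j y_{i,j} = b_k(t)$ on $\mathbf{y}$ with polynomial coefficients.

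For each PILP $Q_\alpha^{(1)}$ produced by the first step, apply Lemma \ref{bt0} to the second transformed equation on the set $\{\mathbf{y} : 0 \le y_{i,j} < t\}$. The variables are now bounded by $t = t^1$, so the lemma applies with parameter $r = 1$: the base $t$ substitution is the identity, no new indeterminates are introduced, and the lemma simply partitions the feasible set via carry analysis into finitely many subsets cut out by additional constant-matrix constraints with right-hand sides of degree at most $1$ in $t$. Intersecting with the constraints defining $Q_\alpha^{(1)}$, which are already in reduced canonical form and so are preserved verbatim, we obtain a refinement $\{Q_{\alpha, \beta}^{(2)}\}$ whose first two transformed equations are now both in reduced canonical form. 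Iterating this procedure once for each of the $m$ equations yields the desired disjoint decomposition of $L(t)$ (via $\varphi_t$) into lattice point sets of finitely many reduced canonical form PILPs.

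The main technical point requiring care is that Lemma \ref{bt0} remains valid in the degenerate case $r = 1$, where $\varphi_t$ is trivial and the reduction from polynomial matrix to constant matrix is driven purely by carry analysis. Inspecting the proof in \cite{CLS}, one sees that this carry analysis---writing the left-hand side as $\sum_\ell t^\ell q_\ell(\mathbf{y})$ with $q_\ell$ linear in $\mathbf{y}$, bounding $|q_\ell|$ by a constant times $t$ using the bounds on $\mathbf{y}$, then casing on the resulting bounded integer carries---is the core of the argument and is independent of the choice of $r$. A minor secondary point is that iterated applications may produce right-hand sides whose leading coefficient in $t$ is negative or zero, whereas Lemma \ref{bt0} was stated with positive leading coefficient; this is handled by sign-flipping or by splitting into finitely many cases according to the sign/degree of the relevant polynomial, neither of which affects the conclusion. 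Disjointness at each stage combines multiplicatively to give the overall disjointness of the final decomposition.
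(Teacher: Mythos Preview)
Your proof is correct but takes a different route from the paper. The paper handles all $m$ equations \emph{simultaneously}: at each degree in the carry analysis of \cite{CLS}, instead of one congruence mod $n$ there are $m$ of them, each with finitely many possible carries, so there are finitely many $m$-tuples of carries at each degree and finitely many $(md)$-tuples overall indexing the pieces. Your approach is \emph{iterative}: apply the $m=1$ lemma to the first equation, then to each subsequent transformed equation with $r=1$ (so $\varphi_t$ is the identity and only the carry analysis is active), intersecting the resulting decomposition with the reduced-canonical constraints already obtained. The paper in fact mentions, just before this proposition, that the iterative route is what Chen--Li--Sam themselves use informally, and then opts for the simultaneous formulation as ``conceptually easier.'' The simultaneous approach avoids having to verify that the $r=1$ degenerate case of the lemma still works and that intersecting with previously-reduced constraints preserves the reduced canonical form; your approach uses the $m=1$ lemma as a black box at the cost of these small verifications, which you handle correctly.
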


\begin{proof}[Proof (sketch)] We follow the argument and notation in Lemma 3.2 of \cite{CLS}, which is quite different from this paper's. In the $m=1$ case, Equation $(3.1)$ of \cite{CLS} reads
\[ \sum_{i=1}^k \left( \left( \sum_{ \ell = 0}^{d_i} a_{i \ell} n^{\ell} \right) \left( x_{i, N} n^N + \cdots + x_{i, 0} \right) \right) = m_N(n) n^N + \cdots + m_0(n).\]
In the $m>1$ case, we have $m$ equations of a similar form.

In the $m=1$ case, the constant term of $(3.1)$ states that
\[a_{10} x_{1,0} + \cdots + a_{k0} x_{k,0} = m_0(n) \pmod{n}.\]
As integers, the difference between the two sides is a multiple of $n.$ There are finitely many possible values for this multiple of $n$ (which don't depend on $n$). To be precise, this multiple of $n$ is between
$-n \sum_i |a_{i,0}| \text{ and } n \sum_i |a_{i,0}|.$
In the $m>1$ case, we consider the constant terms of $(3.1)$ simultaneously. We have $m$ equations mod $n.$ Over integers, each equation has a difference in the two sides which is a multiple of $n,$ and each equation has finitely many possible differences (not depending on $n$). Since $m$ is finite, there are finitely many possible $m$-tuples of differences. 

Next, we look at the degree $1$ terms in $(3.1).$ In the $m=1$ case, there are finitely many possible differences, and in the general case, there are still finitely many possible $m$-tuples of differences. This is true for all degrees up to $d.$ Overall, there are finitely many possible $(md)$-tuples of differences. For each tuple, the set of vectors $(x_{i,j})$ with those differences is exactly described by the lattice point set of a certain PILP in reduced canonical form. Disjointness can be proven as in the $m=1$ case.
\end{proof}

So far, we have only shown that a PILP in standard form is in bijection with the disjoint union of finitely many PILPs in reduced canonical form. This is enough for counting points, but we need a few more observations for optimum value functions. 

We now prove Theorem \ref{SF} assuming Theorem \ref{RCF}. Let $Q$ be a PILP in standard form given by $n, m, \mathbf{c}, A, \mathbf{b}$ with lattice point set $L(t).$ By Proposition \ref{bounded}, there exists $r>0$ such that for $t \gg 0,$ all coordinates of all points in $L(t)$ are in the range $[0, t^r).$ We want to prove that $\{f_{\ell}\},$ the optimum value functions for $Q,$ are EQP. Therefore, we can ignore the finitely many values of $t$ for which $L(t)$ is not contained in $[0,t^r)^n.$ Then, adding the constraints $x_i < t^r$ does not change $L(t).$ 

To apply Proposition \ref{bt1}, we want that all coordinates of $\mathbf{b}$ have positive leading coefficient. Since we can multiply constraints $A(t) \mathbf{x} = \mathbf{b}(t)$ by $\pm 1$ and get the same lattice point set $L(t),$ we can deal with all nonzero elements of $\mathbf{b}(t).$ Since we can add constraints among these $m$ constraints together in an ``invertible'' way, we can deal with all cases except possibly $\mathbf{b}(t) = \mathbf{0}.$ However, in this case, $L(t)$ is closed under multiplication by positive integers and also bounded, meaning that $L(t)$ is either $\{\mathbf{0}\}$ or the empty set,  and the conclusion of Theorem \ref{SF} is easily verified.

 Let $(Q_{\alpha})_{\alpha \in S}$ be the finitely many PILPs in reduced canonical form as guaranteed by Proposition \ref{bt1}. Then $\varphi_t$ is a bijection from $L(t)$ to $\sqcup_\alpha L_\alpha(t).$ Actually, we haven't yet defined objective functions for the $Q_{\alpha}.$ Let each $Q_{\alpha}$ have objective function $c$ which maps $\mathbf{y} = (y_{i,j})$ to $\mathbf{c}^\intercal(t) \varphi_t^{-1}(\mathbf{y}).$ One can check that
\[ \varphi_t^{-1}((y_{i,j})_{1 \le i \le n, 0 \le j < r}) = \left( \sum_{j = 0}^r y_{1, j} t^j, \ldots, \sum_{j=0}^r y_{n, j}t^j \right).\]
Therefore objective function $c$ can be written as a polynomial covector times $\mathbf{y}$ (thus in the form required by Theorem \ref{RCF}), although we won't do so explicitly.  Note that all $Q_{\alpha}$ have the same objective function. By construction, the objective functions of $Q_{\alpha}$ and $Q$ commute with the bijection $\varphi_t$.  Let $f_{\ell, \alpha}$ be the $\ell^{\text{th}}$ optimum value function for the PILP $Q_{\alpha}$ and this objective function. By Theorem \ref{RCF}, each function $f_{\ell, \alpha}$ is EQP. The following lemma relates the functions $\{f_{\ell, \alpha}\}$ to $\{f_{\ell}\}.$

\begin{lemma}\label{fla}Let $t$ and $\ell$ be positive integers. Then $f_{\ell}(t)$ equals the $\ell^{\text{th}}$ largest value among the multiset
\begin{displaymath} \{f_{m, \alpha}(t) \vert 1 \le m \le \ell, \alpha \in S \}.\end{displaymath}
\end{lemma}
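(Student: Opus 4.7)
The plan is to exploit two facts from the setup preceding the lemma: first, that $\varphi_t$ is a bijection from $L(t)$ onto the disjoint union $\sqcup_{\alpha \in S} L_\alpha(t)$; and second, that the objective function $c$ on each $Q_\alpha$ was defined precisely so that $c(\varphi_t(\mathbf{x})) = \mathbf{c}^\intercal(t)\mathbf{x}$. Combining these, the multiset $V(t) := \{\mathbf{c}^\intercal(t)\mathbf{x} \mid \mathbf{x}\in L(t)\}$ equals, as multisets, the disjoint union $\bigsqcup_{\alpha\in S} V_\alpha(t)$, where $V_\alpha(t) := \{c(\mathbf{y}) \mid \mathbf{y} \in L_\alpha(t)\}$. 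By definition, $f_\ell(t)$ is the $\ell^{\text{th}}$ largest entry of $V(t)$ and $f_{m,\alpha}(t)$ is the $m^{\text{th}}$ largest entry of $V_\alpha(t)$, with the convention that entries beyond the cardinality of the corresponding multiset are $-\infty$.

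With this in hand, the lemma becomes a purely combinatorial statement about ordering multisets: if $V = \bigsqcup_\alpha V_\alpha$ (as multisets) and each $V_\alpha$ is listed in nonincreasing order as $v_{1,\alpha} \ge v_{2,\alpha} \ge \cdots$ (padded by $-\infty$), then the $\ell^{\text{th}}$ largest element of $V$ equals the $\ell^{\text{th}}$ largest element of the multiset $M := \{v_{m,\alpha} \mid 1 \le m \le \ell,\ \alpha \in S\}$. To prove this, I will argue the stronger claim that the top $\ell$ entries (with multiplicity) of $V$ coincide with the top $\ell$ entries of $M$. Let $w_1 \ge w_2 \ge \cdots \ge w_\ell$ be the top $\ell$ entries of $V$, and for each $i$, let $\alpha_i$ and $m_i$ be such that $w_i$ comes from position $m_i$ in $V_{\alpha_i}$, so that $w_i = v_{m_i, \alpha_i}$. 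The key observation is that $m_i \le \ell$: otherwise, the $\ell$ entries $v_{1,\alpha_i},\ldots,v_{\ell,\alpha_i}$ of $V_{\alpha_i}$ are all at least as large as $w_i$ in the nonincreasing order of $V$, yet $w_i$ itself has at most $i-1 \le \ell-1$ strict predecessors, which is a contradiction after accounting for multiplicities by a careful tie-breaking bookkeeping. Hence each $w_i$ lies in $M$, so the top $\ell$ of $V$ sit inside $M$.

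Conversely, every element of $M$ is an element of $V$ (it is a value in some $V_\alpha \subseteq V$), so no element of $M$ can exceed $w_1$. More importantly, any element of $M$ that is strictly greater than $w_\ell$ must already appear among $w_1,\ldots,w_\ell$, since those are by construction the top-$\ell$ entries of $V$. This shows that the top $\ell$ entries of $M$ are exactly $w_1, \ldots, w_\ell$; in particular, the $\ell^{\text{th}}$ largest entry of $M$ equals $w_\ell = f_\ell(t)$, which is what the lemma asserts.

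The main obstacle is purely notational: making the multiset accounting rigorous when several of the $f_{m,\alpha}(t)$ coincide or equal $-\infty$. The cleanest way is to work with labeled entries (indexing each copy of a value by its origin $(\alpha, m)$), so that ``the $\ell^{\text{th}}$ largest'' becomes unambiguous once an auxiliary tie-breaking rule is fixed; the conclusion is independent of the choice of rule. With this bookkeeping in place, the bijection-plus-rank argument above gives the lemma directly.
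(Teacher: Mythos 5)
Your proposal is correct and takes essentially the same route as the paper: pull back along $\varphi_t$ to the disjoint union $\sqcup_\alpha L_\alpha(t)$, use commutativity of the objective function with the bijection, and then invoke the general multiset fact that the $\ell^{\text{th}}$ largest value of a disjoint union is determined by the top $\ell$ values of each part. The only difference is that the paper asserts this multiset fact without proof, whereas you sketch a proof (and correctly flag that the sketch needs a fixed tie-breaking labeling to be airtight); this is additional detail, not a different approach.
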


\begin{proof} Fix $t$ and $\ell.$ By commutativity of the bijection and the objective function, $f_{\ell}(t)$ equals the $\ell^{\text{th}}$ largest value of the objective function $c$ in the set $\sqcup_{\alpha} L_{\alpha}(t).$ Thus $f_{\ell}(t)$ equals the $\ell^{\text{th}}$ largest value of a certain (disjoint) union of multisets:
\[ \sqcup_{\alpha} \{ \mathbf{c}^\intercal(t) \mathbf{x} \mid \mathbf{x} \in L_{\alpha}(t) \}.\]
 This uses the fact that the sets $L_{\alpha}(t)$ for $\alpha \in S$ are disjoint.  It is a general statement that the $\ell^{\text{th}}$ largest value in a union of multisets can be found by ignoring all but the $\ell$ largest values in each multiset; it suffices to look at the multiset of the $\ell$ largest elements in each multiset. Our convention is that the $\ell^{\text{th}}$ largest value in a multiset with less than $\ell$ elements is $-\infty.$

 By definition, the $\ell$ largest values in the multiset formed by evaluating $c$ on $L_{\alpha}(t)$ equals the multiset
$\{ f_{1, \alpha}(t), \ldots, f_{\ell, \alpha}(t) \}.$
The conclusion follows.
\end{proof}

Thus $f_{\ell}$ is pointwise related to a finite collection of EQP functions. The following more general proposition shows that $f_{\ell}$ is itself EQP.

\begin{proposition} \label{max} Let $m$ and $\ell$ be positive integers and $f_{1}, \ldots,  f_m$ be eventual quasi-polynomials with codomain $\{-\infty\} \cup \mathbb{Z}$. For all $t,$ let $f(t)$ be the $\ell^{\text{th}}$ largest value among the multiset $\{f_1(t), \ldots,$ $ f_m(t)\}.$ Then $f$ is eventually quasi-polynomial.
\end{proposition}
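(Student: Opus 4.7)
The plan is to reduce to a common period $d$ for all the $f_i$ and then exploit the elementary fact that two distinct real polynomials in one variable are eventually strictly ordered. Choose $d$ as a common period of $f_1, \ldots, f_m$, e.g.\ the least common multiple of their individual periods. After refining, there exist, for each $i \in \{1, \ldots, m\}$ and each residue class $j \in \{0, 1, \ldots, d-1\}$, either a polynomial $P_{i,j} \in \mathbb{R}[t]$ or the symbol $-\infty$, such that $f_i(t) = P_{i,j}(t)$ for all sufficiently large $t$ with $t \equiv j \pmod{d}$.

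Next I would invoke the standard observation that for two real polynomials $P \neq Q$ the difference $P - Q$ has only finitely many real zeros, so the sign of $P(t) - Q(t)$ is eventually constant. Extending this to $-\infty$ by the conventions $-\infty < c$ for every real $c$ and $-\infty = -\infty$, one finds that for each fixed residue class $j$ the multiset $\{P_{1,j}(t), \ldots, P_{m,j}(t)\}$ is sorted in the same order for all $t \gg 0$ with $t \equiv j \pmod{d}$. Consequently the $\ell^{\text{th}}$ largest entry (with multiplicity) is always attained at a fixed index $i_j \in \{1, \ldots, m\}$, and $f(t) = P_{i_j, j}(t)$ on that residue class for $t \gg 0$, which exhibits $f$ as EQP with period dividing $d$.

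The main (and mild) obstacle is the bookkeeping around equal polynomials and $-\infty$ values. When $P_{i,j} = P_{i',j}$ as formal polynomials, these indices contribute identical values at every $t$, so ``with multiplicity'' is automatically respected once ties are broken by the original index $i$; when $P_{i,j}$ and $P_{i',j}$ differ as polynomials, a consistent strict pointwise ordering holds for all $t \gg 0$ by the sign-of-difference argument. The degenerate case in which all $P_{i,j} = -\infty$ for some residue class $j$ simply yields $f(t) = -\infty$ on that class, which is permitted by the convention that the constant function $-\infty$ is treated as a polynomial.
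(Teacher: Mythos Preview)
Your proposal is correct and follows essentially the same approach as the paper: pass to a common period $d$, reduce each residue class to a fixed list of polynomials (or $-\infty$), and use the fact that distinct real polynomials are eventually strictly ordered to fix the sorting and hence the $\ell^{\text{th}}$ largest entry. The paper phrases the last step as the existence of a permutation $\sigma$ with $f_{\sigma(1)}(t) \ge \cdots \ge f_{\sigma(k)}(t)$ for $t \gg 0$, which is exactly your ``eventual consistent ordering'' argument; your explicit handling of ties and $-\infty$ is a bit more careful but amounts to the same thing.
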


\begin{proof} 
Because $m$ is finite, there exists a common period for $f_1, \ldots, f_m$, or an integer $d$ such that there exist polynomials $P_{i, j}$ for all integers $i$ and $j$ with $1 \le i \le m$ and $0 \le j \le d-1$ such that for $t \gg 0 $ and all $i,$ $f_i(t)=P_{i, (t \pmod{d})}(t).$ A polynomial with range $\{-\infty\} \cup \mathbb{Z}$ is either the constant $-\infty$ or integer valued. If $f(t)$ restricted to each residue class $\pmod{d}$ is EQP, then $f$ is EQP. Since we only care about large $t$, it suffices to prove this proposition in the case when $f_1, \ldots, f_m$ are all polynomials (or the constant $-\infty$), so assume that this is the case.

If less than $\ell$ of the polynomials $f_1, \ldots, f_m$ are integer valued, then for all $t,$ the multiset $\{f_1(t), \ldots, f_m(t)\}$ contains less than $\ell$ integers, so $f(t)=-\infty.$ If at least $\ell$ of the polynomials $f_1, \ldots, f_m$ are integer valued, say $f_1$ through $f_k,$ then for all $t,$ $f(t)$ is the $\ell^{\text{th}}$ largest value among the integers $f_1(t), \ldots, f_k(t)$. It is known that there exists a permutation of the polynomials $f_1, \ldots, f_k,$ call it $f_{\sigma(1)}, \ldots, f_{\sigma(k)},$ such that for $t \gg 0,$ $f_{\sigma(1)}(t) \ge \cdots \ge f_{\sigma(k)}(t),$ so $f(t) = f_{\sigma(\ell)}(t)$ for $t \gg 0,$ as desired.
\end{proof}

Proposition \ref{max} and Lemma  \ref{fla} immediately imply that $f_\ell$ is EQP. 
This completes the proof that Theorem \ref{SF} follows from Theorem \ref{RCF}

\section{Proof of Theorem \ref{RCF}}
\label{S4}

We now allow the case $n=0$ in Theorem \ref{RCF}. We prove Theorem \ref{RCF} first in the case $n=0.$ The idea is to prove the remaining cases by strong induction on $\ell + n.$ We are introducing the $n=0$ case because our general proof involves reducing the $n = n_0+1$ case to the $n = n_0$ case, even if $n_0 = 1.$



Suppose that $n=0.$  Mathematically, this means that we have no indeterminates. We only have the zero point in our universe, and for each $t,$ $L(t)$ is either empty or is the singleton of that point. Then $f_\ell(t)=0$ if both $\ell=1$ \textit{and} $L(t)$ is nonempty, and $f_\ell(t)=-\infty$ otherwise. For any matrix $A,$ $A$ times our zero point is the zero vector, so $L(t)$ is nonempty if and only if $\mathbf{0} \le \mathbf{b}(t).$ Since the elements of $\mathbf{b}$ are polynomials, this inequality is either eventually true or eventually false (as $t$ goes to $\infty$). This corresponds to $f_1(t)=0$ eventually or $f_1(t)=-\infty$ eventually, respectively, as desired.

We now prove some propositions needed for the induction. Let $Q$ be a PILP in canonical form given by positive integers $n_0$ and $m,$ $\mathbf{c}$ in $\mathbb{Z}[t]^{n_0}$, $A$ in $\mathbb{Z}^{m \times n_0}$, and $\mathbf{b}$ in $(\mathbb{Z}t + \mathbb{Z})^m$ which defines regions $R(t)$ and $L(t)$ and optimum value function $f_{\ell_0}.$ 

Here is the general approach. We first prove that for all $\ell,$ to find $f_{\ell}(t),$ it suffices to look at only points which are less than $\ell$ away from a bounding hyperplane of $R(t)$ and ignore all other points. This motivates us to construct a parametric set $L^*(t)$ which contains exactly the points in $L(t)$ which are less than $\ell$ away from a bounding hyperplane of $R(t).$ This is most naturally achieved by constructing a finite number of PILPs in the same $n_0$ indeterminates such that in each one, the feasible points lie in a hyperplane parallel and close to a bounding hyperplane of $R(t)$. Integrality ensures that for each bounding hyperplane of $R(t),$ the points less than $\ell$ away lie on finitely many parallel hyperplanes. Note that a single bounding hyperplane of $t$ does not change its normal vector because $A$ has constant entries. We carefully construct the new PILPs so that they are mutually exclusive; this allows us to characterize the $\ell^{\text{th}}$ optimum for $\ell > 1.$  Finally, we transform a PILP whose feasible set lies on an affine subspace into a PILP with one less indeterminate by using standard manipulations of lattices.

\begin{proposition} \label{ctb} For all $t$ and $\ell,$ $f_{\ell}(t)$ equals the $\ell^{\text{th}}$ largest value of $\mathbf{c}^{\intercal}(t) \mathbf{x}$ over all $\mathbf{x}$ in $L(t)$ that are less than $\ell$ away from some bounding hyperplane of $R(t)$ (in the Euclidean metric).
\end{proposition}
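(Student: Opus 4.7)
The plan is to prove the equality by establishing both inequalities between $f_\ell(t)$ and the quantity $g_\ell(t)$ defined as the $\ell^{\text{th}}$ largest value of $\mathbf{c}^\intercal(t)\mathbf{x}$ over $L_{<\ell}(t) := \{\mathbf{x}\in L(t) : d(\mathbf{x},H)<\ell \text{ for some bounding hyperplane } H \text{ of } R(t)\}$. Since $L_{<\ell}(t)\subseteq L(t)$, the inequality $f_\ell(t)\ge g_\ell(t)$ is immediate, so the content lies in $g_\ell(t)\ge f_\ell(t)$, equivalently in exhibiting at least $\ell$ lattice points in $L_{<\ell}(t)$ whose objective value is $\ge f_\ell(t)$.

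The main geometric input is that if $\mathbf{x}_0\in L(t)$ lies at Euclidean distance at least $\ell$ from every bounding hyperplane of $R(t)$, then the closed ball $B(\mathbf{x}_0,\ell)$ is contained in $R(t)$: for any defining constraint $\mathbf{a}^\intercal\mathbf{x}\le b$ and any $\|\mathbf{v}\|\le\ell$, Cauchy--Schwarz gives $\mathbf{a}^\intercal(\mathbf{x}_0+\mathbf{v})\le\mathbf{a}^\intercal\mathbf{x}_0+\|\mathbf{a}\|\,\ell\le b$. Assuming $\mathbf{c}(t)\ne\mathbf{0}$, fix an index $i$ with $c_i(t)\ne 0$ and set $\mathbf{v}:=\mathbf{e}_i$ if $c_i(t)>0$ and $\mathbf{v}:=-\mathbf{e}_i$ otherwise, so that $\mathbf{v}$ is a unit lattice vector with $\mathbf{c}^\intercal(t)\mathbf{v}=|c_i(t)|>0$. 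Then for every such interior $\mathbf{x}_0$, the $\ell$ lattice points $\mathbf{x}_0+k\mathbf{v}$ for $k=1,\ldots,\ell$ lie in $B(\mathbf{x}_0,\ell)\cap\mathbb{Z}^n\subseteq L(t)$, and each has strictly larger objective value than $\mathbf{x}_0$. Hence $L(t)$ contains at least $\ell$ points whose value strictly exceeds $\mathbf{c}^\intercal(t)\mathbf{x}_0$, forcing $\mathbf{c}^\intercal(t)\mathbf{x}_0<f_\ell(t)$. Consequently every lattice point attaining value $\ge f_\ell(t)$ must lie in $L_{<\ell}(t)$, and there are at least $\ell$ such points by definition of $f_\ell(t)$, proving $g_\ell(t)\ge f_\ell(t)$.

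The main obstacle I anticipate is the degenerate case $\mathbf{c}(t)=\mathbf{0}$, where the direction-picking argument collapses and the claim reduces to showing $|L_{<\ell}(t)|\ge\ell$ whenever $|L(t)|\ge\ell$. I would handle this by direct geometry: if $L_{\ge\ell}(t):=L(t)\setminus L_{<\ell}(t)$ is empty the claim is trivial, and otherwise fix $\mathbf{x}_0\in L_{\ge\ell}(t)$, so that $B(\mathbf{x}_0,\ell)\subseteq R(t)$. For $n\ge 2$, each of the $\ell$ lattice lines $\{\mathbf{x}_0+k_2\mathbf{e}_2+k\mathbf{e}_1: k\in\mathbb{Z}\}$ for $k_2=0,1,\ldots,\ell-1$ has a base point inside the ball, hence meets $R(t)$ and has a largest-$k$ lattice point in $R(t)$; one further step along $\mathbf{e}_1$ exits $R(t)$, so that endpoint is within distance strictly less than $1\le\ell$ of some bounding hyperplane and therefore belongs to $L_{<\ell}(t)$. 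The $\ell$ endpoints produced have pairwise distinct second coordinates and so are distinct. For $n=1$, $R(t)=[a,b]$ and $L_{<\ell}(t)$ is exactly the set of integers in $[a,a+\ell)\cup(b-\ell,b]$; a short case split on whether $b-a<2\ell$ or not yields $|L_{<\ell}(t)|\ge\min(|L(t)|,2\ell)\ge\ell$.
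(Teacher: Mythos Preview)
Your proof is correct and follows essentially the same approach as the paper's. Both arguments hinge on the observation that if $\mathbf{x}_0\in L(t)$ is at distance $\ge\ell$ from every bounding hyperplane and $\mathbf{c}(t)\ne\mathbf{0}$, then translating $\mathbf{x}_0$ by a unit lattice vector of positive objective increment produces $\ell$ points in $L(t)$ with strictly larger value, so $\mathbf{x}_0$ cannot realize any of the top $\ell$ values.

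The only notable difference is in the degenerate case $\mathbf{c}(t)=\mathbf{0}$. The paper handles it more simply: choose $\ell$ points $\mathbf{v}_1,\ldots,\mathbf{v}_\ell$ of $L(t)$ with minimal $x_1$-coordinate (without replacement) and observe that for each $\mathbf{v}_i$ the translates $\mathbf{v}_i-\mathbf{e}_1,\ldots,\mathbf{v}_i-\ell\mathbf{e}_1$ cannot all lie in $L(t)$, since otherwise $\mathbf{v}_i$ would not have been among the $\ell$ smallest; hence each $\mathbf{v}_i$ is within $\ell$ of the boundary. This avoids your case split on $n$ and the separate $n=1$ computation, though your line-sweep argument is equally valid.
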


\begin{proof} Fix $t$ and $\ell.$ 

\textbf{Case 1:} $|L(t)|<\ell.$ Then $f_{\ell}(t)=-\infty,$ and the $\ell^{\text{th}}$ largest value of $\mathbf{c}^{\intercal}(t) \mathbf{x}$ over all $\mathbf{x}$ in $L(t)$ that are less than $\ell$ away from some bounding hyperplane of $R(t)$ is naturally equal to $-\infty.$ See Proposition \ref{ht} for specifics.

\textbf{Case 2:} $|L(t)| \ge \ell.$ Then $f_{\ell}(t) \neq -\infty.$

\textbf{Case 2.1:} $\mathbf{c}^{\intercal}(t) = \mathbf{0}.$ The objective function is constant. To prove the proposition in this case, we need to show that at least $\ell$ distinct points in $L(t)$ are less than $\ell$ away from some bounding hyperplane of $R(t).$ Successively choose $\ell$ points $\mathbf{v_1}, \ldots, \mathbf{v_\ell}$ with minimal $x_1$ coordinate from $L(t)$ without replacement, which is possible since $|L(t)| \ge \ell.$ Let $\mathbf{e_1}$ be the unit vector in the first coordinate. For each $\mathbf{v_i},$ $\mathbf{v_i}-\mathbf{e_1}, \ldots, \mathbf{v_i}-\ell \mathbf{e_1}$ cannot all be in $L(t),$ or else $\mathbf{v_i}$ could not be picked in this process. Therefore, for $i=1, \ldots, \ell,$ $\mathbf{v_i}$ is less than $\ell$ away from the boundary of $R(t),$ as desired.

\textbf{Case 2.2:} $\mathbf{c}^{\intercal}(t) \neq \mathbf{0}.$ To prove the proposition, it suffices to show that for $k=1, \ldots, \ell,$ the $k^{\text{th}}$ largest value does not occur at any point of $L(t)$ which is at least $\ell$ away from the boundary of $R(t).$ Suppose, for the sake of contradiction, that for $k \le \ell,$ the $k^{\text{th}}$ largest value occurs at a point $\mathbf{v}$ in $L(t)$ which is at least $\ell$ away from the boundary of $R(t).$ Since the objective function is linear but not constant, it is greater at some lattice point 1 away from $\mathbf{v},$ say $\mathbf{w},$ than it is at $\mathbf{v}.$ By assumption, for $s=1, \ldots, \ell,$ $\mathbf{v}+s(\mathbf{w}-\mathbf{v})$ lies in $R(t)$ hence in $L(t),$ and 
\begin{displaymath} \mathbf{c}^{\intercal}(t) (\mathbf{v}+s(\mathbf{w}-\mathbf{v})) > \mathbf{c}^{\intercal}(t) (\mathbf{v}), \end{displaymath} which contradicts that the $k^{\text{th}}$ largest value occurs at $\mathbf{v}.$ Therefore, the assumption was wrong, as desired.
\end{proof}

 By standard vector calculations, a lattice point less than $\ell_0$ away from the hyperplane $\sum_{i=1}^{n_0} a_i x_i = a_0$ lies on a hyperplane $\sum_{i=1}^{n_0} a_i x_i = a_0 + j,$ where \begin{displaymath}|j| < \ell_0 \sqrt{\sum_{i=1}^n a_i^2}.\end{displaymath}

If $a_0, \ldots, a_n$ are all integers, then $j$ is an integer (and there are finitely many hyperplanes to consider).

We can write the inequalities $\mathbf{x} \ge 0$ as extra rows for $A$ and $\mathbf{b}$ in an obvious way. The constraints of $Q$ are now $A \mathbf{x} \le \mathbf{b}(t).$ We ignore the rows of $A$ which are all $0$ because these rows define inequalities that are either true for sufficiently large $t$ or false for sufficiently large $t.$ In the former case, we can ignore the row for $t \gg 0,$ and in the latter, $f_{\ell_0}(t)$ is eventually $-\infty$. We redefine $m$ so that $A$ has $m$ rows. 

For all $i$ with $1 \le i \le m,$ let
\begin{displaymath} c_i := \left \lceil \ell_0 \sqrt{\sum_{j=1}^n A_{ij}^2} \; \; \right \rceil. \end{displaymath}

We define $c_i$ PILPs which correspond to the parametric hyperplanes near to and parallel to the parametric hyperplane $(A\mathbf{x})_i \le \mathbf{b}(t)_i.$ For all such $i$ and $k=0, 1, \ldots, c_i-1$ let $Q_{i, k}$ be the PILP in canonical form which is our original PILP with the same objective function and constraints with some extra constraints: 
\begin{align} \label{qik_hyperplane} \left( \sum_{j=1}^{n_0} A_{ij} x_j \ge b_i(t) - k  \right) \wedge \left( - \sum_{j=1}^{n_0} A_{ij} x_j \ge - b_i(t) + k\right) \end{align}
and for $h=1, \ldots, i-1,$ 
\begin{align} \label{qik_exclude} \sum_{j=1}^{n_0} A_{hj} x_j \le b_h(t) - c_h.  \end{align}

 Note that $b_i$ and $b_h$ have degree at most $1.$ Let $Q_{i, k}$ define regions $R_{i, k}(t)$ and $L_{i, k}(t)$ and optimum value functions $\{f_{\ell, i, k}\}.$ (As before, it is possible for many of the sets $L_{i, k}(t)$ to be empty.) The condition $($\ref{qik_hyperplane}$)$ restricts the PILP to a single parametric hyperplane. The conditions in $($\ref{qik_exclude}$)$ are imposed in order to make the PILPs mutually exclusive. In other words, for fixed $t,$ the sets $L_{i, k}(t)$ are disjoint.

\begin{proposition} \label{ht} For all $t,$ $f_{\ell_0}(t)$ equals the $\ell_0^{\text{th}}$ largest value in the multiset 
\begin{displaymath} \{f_{\ell, i, k}(t) \vert 1 \le \ell \le \ell_0, 1 \le i \le m, 0 \le k < c_i\}. \end{displaymath}
\end{proposition}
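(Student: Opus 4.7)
The plan is to combine Proposition \ref{ctb} with a careful disjoint decomposition of the near-boundary lattice points. Define
\[ L'(t) := \{\mathbf{x} \in L(t) : \mathbf{x} \text{ is less than } \ell_0 \text{ away from some bounding hyperplane of } R(t)\}. \]
By Proposition \ref{ctb}, $f_{\ell_0}(t)$ equals the $\ell_0^{\text{th}}$ largest value of $\mathbf{c}^{\intercal}(t)\mathbf{x}$ as $\mathbf{x}$ ranges over $L'(t)$. The goal is to show that $L'(t) = \bigsqcup_{i,k} L_{i,k}(t)$ for every $t$, and then to conclude via the same elementary multiset principle used inside Lemma \ref{fla}.

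The decomposition rests on one integrality observation. For $\mathbf{x} \in L(t)$ and the $i$-th bounding hyperplane $\sum_j A_{ij} x_j = b_i(t)$, the quantity $b_i(t) - (A\mathbf{x})_i$ is a nonnegative integer (since $A$ has integer entries and $b_i(t), \mathbf{x}$ are integer-valued), and the Euclidean distance from $\mathbf{x}$ to this hyperplane equals $\bigl(b_i(t) - (A\mathbf{x})_i\bigr)/\sqrt{\sum_j A_{ij}^2}$. Thus $\mathbf{x}$ is less than $\ell_0$ away from the hyperplane iff $b_i(t) - (A\mathbf{x})_i < \ell_0 \sqrt{\sum_j A_{ij}^2}$, and by the integrality of the left side this is equivalent to $b_i(t) - (A\mathbf{x})_i \in \{0,1,\ldots,c_i - 1\}$ by the very definition of $c_i$ as a ceiling. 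Dually, $\mathbf{x}$ is at least $\ell_0$ away from the $i$-th hyperplane iff $b_i(t) - (A\mathbf{x})_i \ge c_i$, which is exactly constraint (\ref{qik_exclude}).

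With this in hand, disjointness and covering become routine. Suppose $\mathbf{x} \in L_{i,k}(t) \cap L_{i',k'}(t)$ with $(i,k) \ne (i',k')$. If $i = i'$, then (\ref{qik_hyperplane}) forces $k = k'$, a contradiction; otherwise assume $i < i'$, in which case (\ref{qik_exclude}) from $Q_{i',k'}$ at index $h = i$ gives $(A\mathbf{x})_i \le b_i(t) - c_i$, contradicting (\ref{qik_hyperplane}) from $Q_{i,k}$ which yields $(A\mathbf{x})_i = b_i(t) - k$ with $k < c_i$. Conversely, given $\mathbf{x} \in L'(t)$, let $i$ be the smallest index such that $\mathbf{x}$ is less than $\ell_0$ from the $i$-th hyperplane, and set $k := b_i(t) - (A\mathbf{x})_i$; then $0 \le k < c_i$ and (\ref{qik_hyperplane}) is satisfied, while for each $h < i$ the minimality of $i$ together with the dual observation above yields (\ref{qik_exclude}). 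Hence $\mathbf{x} \in L_{i,k}(t)$.

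Once $L'(t) = \bigsqcup_{i,k} L_{i,k}(t)$ is established, since all $Q_{i,k}$ share the same objective function as $Q$, the multiset $\{\mathbf{c}^{\intercal}(t)\mathbf{x} : \mathbf{x} \in L'(t)\}$ is the disjoint union over $(i,k)$ of the analogous multisets on $L_{i,k}(t)$. Applying the multiset fact cited in the proof of Lemma \ref{fla} — that the $\ell_0^{\text{th}}$ largest element of a disjoint union of multisets coincides with the $\ell_0^{\text{th}}$ largest element of the union of the top $\ell_0$ entries from each piece — gives the claimed equality. The degenerate case $|L(t)| < \ell_0$ is automatic, since then $\sum_{i,k}|L_{i,k}(t)| \le |L(t)| < \ell_0$, so the right-hand multiset contains fewer than $\ell_0$ finite entries and its $\ell_0^{\text{th}}$ largest is $-\infty$, matching $f_{\ell_0}(t)$. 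The main obstacle is just the integrality bookkeeping: ensuring that the ceiling in the definition of $c_i$ cleanly separates the "near" and "far" regimes, which is exactly what the observation in the second paragraph accomplishes.
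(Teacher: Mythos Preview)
Your proof is correct and follows essentially the same approach as the paper's: both argue that the near-boundary lattice points decompose as the disjoint union $\bigsqcup_{i,k} L_{i,k}(t)$ by taking the minimal index $i$ for which $\mathbf{x}$ is close to the $i$-th hyperplane, and then invoke Proposition~\ref{ctb} together with the multiset principle from Lemma~\ref{fla}. You are simply more explicit than the paper about the integrality bookkeeping behind the ceiling in $c_i$ and about verifying disjointness, which the paper leaves as ``by construction.''
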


\begin{proof} 
By construction, for fixed $t,$ the regions $L_{i, k}(t)$ are disjoint. For fixed $t,$ consider $\mathbf{x}$ in $L(t)$ that is less than $\ell_0$ away from the boundary of $R(t).$ There are finitely many bounding hyperplanes, so there exists a minimum $i$ such that $\mathbf{x}$ is less than $\ell_0$ away from the hyperplane given by the $i^{\text{th}}$ row. Then $\mathbf{x}$ lies in $L_{i, k}(t),$ where $k$ is the distance from $\mathbf{x}$ to the $i^{\text{th}}$ hyperplane (at $t$) times $(\sum_j A_{ij}^2)^{1/2}$ (this product is less than $c_i$). This shows that the disjoint union of $L_{i, k}(t)$ for $1 \le i \le m$ and $0 \le k < c_i$ is the set of all points in $L(t)$ that are less than $\ell_0$ away from a bounding hyperplane of $R(t).$ Combining this fact with Proposition \ref{ctb} shows that $f_{\ell_0}(t)$ is the $\ell_0^{\text{th}}$ largest value in the union of the multisets of the $\ell_0$ largest values from all $L_{i, k}(t).$
\end{proof}

\begin{proposition} \label{p5} Assume that $\ell_0, n_0$ are positive integers such that Theorem \ref{RCF} is true for all positive integers $\ell, n$ such that $\ell + n < \ell_0 + n_0.$ Fix $\ell, i,$ and $k$ such that $\ell \le \ell_0,$ $1 \le i \le m,$ $0 \le k < c_i.$ $f_{\ell, i, k}$ is EQP. \end{proposition}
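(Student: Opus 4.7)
The strategy is to use the equality constraint in (\ref{qik_hyperplane}) to pin feasible $\mathbf{x}$ to a rational affine hyperplane, eliminate one degree of freedom to land in $n_0-1$ indeterminates, and invoke the induction hypothesis on $\ell + n$ (valid because $\ell + (n_0-1) < \ell_0 + n_0$ for every $\ell \le \ell_0$).

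First, I would parametrize the integer solutions of $\sum_{j} A_{ij} x_j = b_i(t) - k$. Let $d := \gcd(A_{i1}, \ldots, A_{i n_0}) \ge 1$ (we may assume $A_i \ne 0$ since zero rows were discarded). Integer solutions exist iff $d \mid b_i(t) - k$, a condition depending only on $t \bmod d$; on residue classes where solutions do not exist, $f_{\ell, i, k} \equiv -\infty$, so I focus on classes where they do. In such a class, write $b_i(t) - k = d\,g(t)$ with $g \in \mathbb{Z}[t]$ of degree $\le 1$, pick $\mathbf{v} \in \mathbb{Z}^{n_0}$ with $A_i \mathbf{v} = d$ by Bezout, and take the particular solution $\mathbf{x}_0(t) := g(t)\mathbf{v}$, whose entries lie in $\mathbb{Z}[t]$ and have degree $\le 1$. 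Let $M \in \mathbb{Z}^{n_0 \times (n_0-1)}$ have columns forming a $\mathbb{Z}$-basis of the rank-$(n_0-1)$ integer kernel of $A_i$, whose existence follows from the Smith normal form. Then $\mathbf{y} \mapsto \mathbf{x}_0(t) + M\mathbf{y}$ is an affine bijection from $\mathbb{Z}^{n_0-1}$ onto the set of integer solutions of the hyperplane equation.

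Substituting $\mathbf{x} = \mathbf{x}_0(t) + M\mathbf{y}$ into the remaining constraints of $Q_{i,k}$ (the inequalities $A_h \mathbf{x} \le b_h(t)$ for $h \ne i$, the non-negativity $\mathbf{x} \ge 0$ written as extra rows, and the exclusions (\ref{qik_exclude})) yields a system $G\mathbf{y} \le \mathbf{h}(t)$, where $G$ is a constant integer matrix (the stacked rows times $M$) and $\mathbf{h}(t)$ has entries of degree $\le 1$ in $t$, since $A$ and $M$ are integer and $\mathbf{b}(t)$, $\mathbf{x}_0(t)$ are linear in $t$. The objective becomes $\mathbf{c}^\intercal(t)\mathbf{x}_0(t) + (\mathbf{c}^\intercal(t) M)\mathbf{y}$, a polynomial in $t$ plus a $\mathbb{Z}[t]$-linear form in $\mathbf{y}$. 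Boundedness of the $\mathbf{y}$-region is inherited from boundedness of $R_{i,k}(t)$ via the affine bijection. What I have produced is a PILP in general form on $n_0 - 1$ indeterminates with constant integer coefficient matrix and linear polynomial right-hand side; it differs from reduced canonical form only by the absence of $\mathbf{y} \ge 0$.

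To restore non-negativity without enlarging $n$ or raising the degree of the right-hand side, decompose $\mathbb{Z}^{n_0-1}$ into the $2^{n_0-1}$ disjoint orthants $O_\eta$ indexed by $\eta \in \{0,1\}^{n_0-1}$, where $y_j \ge 0$ if $\eta_j = 0$ and $y_j \le -1$ if $\eta_j = 1$. The invertible affine change $z_j = y_j$ or $z_j = -1 - y_j$ (diagonal $\pm 1$ linear part plus integer shift) sends each $O_\eta$ bijectively to $\mathbb{Z}_{\ge 0}^{n_0-1}$, preserves the ``constant integer $G$, degree-$\le 1$ $\mathbf{h}$'' structure, and only shifts the objective by a polynomial in $t$. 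Each orthant therefore yields a PILP in reduced canonical form with $n_0 - 1$ indeterminates, and by the induction hypothesis each of its first $\ell$ optimum value functions is EQP. Because the orthants partition the $\mathbf{y}$-feasible set, the reasoning behind Lemma \ref{fla} together with Proposition \ref{max} gives that $f_{\ell, i, k}$ is EQP on each residue class of $t$ mod $d$, and assembling these finitely many classes yields an EQP on all sufficiently large $t$. The main obstacle is orchestrating the two splittings simultaneously: the residue-class split is what makes $\mathbf{x}_0(t)$ a degree-$\le 1$ polynomial rather than merely rational in $t$, and the orthant split is what recovers $\mathbf{z} \ge 0$, all while correctly tracking the $\ell$-th largest value through the affine bijections.
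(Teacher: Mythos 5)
Your proposal follows the same backbone as the paper's proof---parametrize the integer points of the hyperplane affinely via a particular solution plus a $\mathbb{Z}$-basis of the kernel, reduce to $n_0-1$ indeterminates, and close with the induction hypothesis on $\ell+n$---but you restore non-negativity by a genuinely different device. The paper bounds every coordinate of every vertex of $R'(t)$ by a linear function of $t$ (reusing the argument of Proposition~\ref{bounded}), picks $K$ large enough, and translates once by $(Kt+K,\ldots,Kt+K)$; you instead partition $\mathbb{Z}^{n_0-1}$ into $2^{n_0-1}$ disjoint shifted orthants, apply a diagonal $\pm 1$ affine change in each, and then reassemble via the Lemma~\ref{fla}/Proposition~\ref{max} mechanism. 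Your route trades a one-shot bound on vertex coordinates for an exponential (in $n_0$) blow-up in the number of PILPs; neither cost matters for the EQP conclusion, and the orthant split is arguably more self-contained since it sidesteps estimating $R'(t)$ entirely. Two small inaccuracies, both also present in latent form in the paper and harmless: (i) $g(t) = \tfrac{1}{d}(b_i(t)-k)$ need not lie in $\mathbb{Z}[t]$; it is a degree-$\le 1$ polynomial with rational coefficients that is integer-valued on the relevant residue class, and you must implicitly substitute $t = qs+p$ so that the resulting right-hand sides and particular solution become honest integer linear polynomials in the new parameter $s$---otherwise $\mathbf{h}(t)$ and $\mathbf{c}^\intercal(t)\mathbf{x}_0(t)$ may have rational coefficients (the paper handles the latter by introducing the integer $z$ and $Z(t)$). (ii) The orthant change does not merely shift the objective by a polynomial; it also negates the $z_j$-coefficients corresponding to $\eta_j=1$, though the result is still a $\mathbb{Z}[t]$-linear form in $\mathbf{z}$ plus a polynomial shift, which is all that's needed. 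With those clarifications your argument is correct.
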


\begin{proof} The idea is that since $R_{i, k}(t)$ lies on a hyperplane, $Q_{i, k}$ is like some PILP with $n_0-1$ indeterminates. Intuitively, $f_{\ell, i, k}$ should be closely related to the $\ell^{\text{th}}$ optimum value function for this new PILP. 

For all $t,$ $R_{i, k}(t)$ lies on the hyperplane $\sum_{j=1}^{n_0} A_{i,j} x_j = b_i(t) - k;$ call this hyperplane $P(t).$ Let $d=\gcd(A_{i,1}, \ldots, A_{i, n_0}),$ which can be written as $\sum_{h=1}^{n_0} \beta_h A_{i, h}$ for integers $\beta_h.$ Then $P(t)$ contains lattice points (not necessarily in $L_{i, k}(t)$) if and only if $b_i(t)-k$ is a multiple of $d.$ Since $b_i$ has degree at most $1,$ this occurs either never or periodically; in particular there exist positive integers $p$ and $q$ such that $P(t)$ contains lattice points if and only if $t \equiv p \pmod{q}.$ In the former case (``never"), $f_{\ell, i, k}(t)=-\infty,$ and we are done. Therefore, assume the ``periodic" case. Then $f_{\ell, i, k}(t)=-\infty$ when $t \not \equiv p \pmod{q}.$

Suppose that $t \equiv p \pmod{q}.$ Let $g(t)$ be the polynomial $\frac{1}{d} (b_i(t)-k),$ which has degree at most $1$ and is an integer for these $t$. The lattice point $(g(t)\beta_1, \ldots, g(t)\beta_{n_0})$ lies in $P(t).$ The lattice points in $P(t)$ translated by $-(g(t)\beta_1, \ldots, g(t)\beta_{n_0})$ form an integer lattice $U$ which is independent of $t$ and which spans an $n_0-1$ dimensional space. Therefore, there exist independent vectors in $\mathbb{Z}^{n_0},$ $\mathbf{e_1}, \ldots, \mathbf{e_{n_0-1}},$ such that
$\varphi_t: \mathbb{Z}^{n_0-1} \rightarrow \mathbb{Z}^{n_0}$ given by 

\begin{displaymath} \varphi_t(\mathbf{y}) = (g(t)\beta_1, \ldots, g(t)\beta_{n_0}) + \sum_{h=1}^{n-1} y_h\mathbf{e_h} \end{displaymath}
is injective with image $\mathbb{Z}^{n_0} \cap P(t).$

Define a new PILP $Q'$ \textit{not} in canonical form with indeterminates $\mathbf{y}=(y_1, \ldots, y_{n_0-1})$ (which correspond to coefficients of the $\mathbf{e_h}$), regions $R'(t)$ and $L'(t),$ and optimum value functions $\{f_\ell'\}.$ Each constraint of $Q_{i, k}$ has the form $\mathbf{w}^{\intercal} \mathbf{x} \le W(t),$ where $\mathbf{w}$ is an integer vector and $W$ is in $\mathbb{Z} t + \mathbb{Z}$. For each such constraint, $Q'$ has the constraint
\begin{displaymath} \mathbf{w}^{\intercal} \left((g(t) \beta_1, \ldots, g(t) \beta_{n_0}) + \sum_{h=1}^{n_0-1} y_h \mathbf{e_h}\right) \le W(t), \end{displaymath}
and $Q'$ has no other constraints. Since $W$ and $g$ are rational polynomials of degree at most $1,$ the constraints of $Q'$ can be equivalently written as parametric inequalities of the integer indetermates $\mathbf{y}$ where the coefficients on the right hand side are in $\mathbb{Z} t + \mathbb{Z}$ and the coefficients on the left hand side are integers. The objective function, which we denote by $c(\mathbf{y}),$ is
\begin{displaymath} \mathbf{c}^{\intercal}(t) \left((g(t)\beta_1, \ldots, g(t)\beta_{n_0}) + \sum_{h=1}^{n-1} y_h\mathbf{e_h}\right), \end{displaymath}
which can be written as a rational polynomial covector times $\mathbf{y}$ plus a rational polynomial. 

By construction, for each $t$ which is $p \pmod{q},$ the map $\varphi_t$ defined above maps $R'(t)$ to $R_{i, k}(t).$ The map $\varphi_t$ is a bijection between these two sets because $\varphi_t$ is a bijection between $\mathbb{Z}^{n_0-1}$ and $\mathbb{Z}^{n_0} \cap P(t).$ The bijection commutes with evaluating the respective objective function, so $f_{\ell, i, k}(t) = f'_\ell(t).$ (When $t \not \equiv p \pmod{q},$ recall that $f_{\ell, i, k}(t) = -\infty.$) If the PILP $Q'$ satisfied the hypotheses of Theorem \ref{RCF}, then we would be done. This is not the case, so we have a bit more work to do. In particular, the lattice point set of $Q'$ might not lie in the first orthant, and its objective function might not have the form of an integer polynomial covector times the vector of indeterminates.

For all $t,$ $R'(t)$ is bounded because it is contained in some degenerate affine transformation of $R_{i, k}(t)$. Using the same argument as in Proposition \ref{bounded}, vertices of $R'(t)$ have coordinates bounded in magnitude by $(n_0-1)! \alpha(t)^{n_0 - 2} \beta(t),$\footnote{If $n_0 = 1,$ then instead, observe that $Q'$ has zero indeterminates, so it is already in the first orthant, so let $Q'' = Q'.$} which is in turn bounded by a linear function of $t.$ We are using that the constraints of $Q'$ are parametric inequalities in which the coefficients on the right hand side have degree at most $1$ and the coefficients on the left hand side are integers. Let $K$ be an integer so that $Kt + K$ is greater than this bound. Let $Q''$ be a new PILP in which the constraints of $Q'$ are ``translated'' by $(Kt+K, \ldots, Kt+K).$ To be precise, this means that the lattice point set of $Q''$ equals that of $Q'$ translated by $(Kt+K, \ldots, Kt+K)$ so that the former lattice point set lies in the first orthant. Let $Q''$ have the exact same objective function as $Q',$ $c(\mathbf{y}).$ Note that the constraints of $Q''$ are also parametric inequalities whose right hand sides have degree at most $1$ and whose left hand sides are integers.


The function $c(\mathbf{y})$ does not have the form of an integer polynomial covector times the vector of indeterminates, but there exist $z$ in $\mathbb{Z}_+$ and $Z$ in $\mathbb{Z}[t]$ such that $zc(\mathbf{y}) - Z(t)$ is an integer polynomial covector times $\mathbf{y}.$ Let $Q'''$ be yet another PILP with the same indeterminates and constraints with objective function $zc-Z(t),$ which is multiplication by an integer polynomial covector. Let $\{f''_\ell\}$ be the family of optimum value functions for $Q'',$ etc.


$R'''(t)$ is bounded for all $t,$ and by the definition of $K,$ $R'''(t)$ lies in the first orthant, so we can write $Q'''$ in reduced canonical form by adding the constraints $\mathbf{y} \ge \mathbf{0}$ without changing its optimum value functions. The PILP $Q'''$ has $n_0-1$ indeterminates, the right hand sides have degree at most $1,$ and $\ell \le \ell_0.$ We are assuming that Theorem \ref{RCF} applies when either $(\ell + n < \ell_0+n_0$ and $n > 0)$ or $n = 0,$ and one of these cases holds for $Q''',$ so $f'''_\ell$ is EQP. 

For $t$ such that $f'''_\ell(t)$ is finite, $zf''_\ell(t)-Z(t)=f'''_\ell(t).$ For $t$ such that $f'''_\ell(t)=-\infty,$ $f''_\ell(t)=-\infty.$ 
This easily implies that $f''_\ell$ is EQP. 

As in the end of Section \ref{S2}, the translation from $Q'$ to $Q''$ adds a polynomial to the optimum value functions or takes $-\infty$ to $-\infty.$ Therefore, $f'_\ell$ is EQP. Restricting an EQP to the residue class $p \pmod{q}$ and replacing the other outputs by $-\infty$ gives another EQP, so $f_{\ell, i, k}$ is EQP, as desired.
\end{proof}

\begin{proof}[Proof of Theorem \ref{RCF}] We have already established the $n=0$ case. We use strong induction on $\ell+n.$ Let $\ell_0, n_0$ be positive integers and assume that Theorem \ref{RCF} is true for all positive integers $\ell, n$ such that $\ell + n < \ell_0 + n_0.$ There is no base case of $\ell_0+n_0$ because the case $n=0$ acts as the base case.

By Proposition \ref{ht}, $f_{\ell_0}(t)$ equals the $\ell_0^{\text{th}}$ largest value among the multiset \begin{displaymath} \{f_{\ell, i, k}(t) \vert 1 \le \ell \le \ell_0, 1 \le i \le m, 0 \le k < c_i\}. \end{displaymath} By Proposition \ref{p5}, $f_{\ell, i, k}$ is EQP for $\ell \le \ell_0, 1 \le i \le m,$ and $0 \le k < c_i.$ Proposition \ref{max} shows that $f_{\ell_0}$ is EQP, which shows that Theorem \ref{RCF} is true for the case $\ell=\ell_0$ and $n=n_0,$ which completes the induction.
\end{proof}

\begin{remark} \label{two-variables} Theorems \ref{GF} and \ref{SF} are not true for more than one parameter. For example, consider the following PILP with positive integer parameters $t_1, t_2$ and indeterminates $a, b, c.$ The constraints are $0 < a, b, c < 2 t_1 t_2$ and $a t_1 + c = b t_2.$ The objective function is to minimize $c.$ The optimum is easily seen to be
$ \min(c) = \gcd(t_1, t_2).$ This two-parameter function is not considered EQP.


In Theorems \ref{GF} and \ref{SF}, one cannot replace $\ell$ with a polynomial. For example, consider the PILP with indeterminates $x_1$ and $x_2$ in which $R(t)$ is the region $x_1 \ge 0, x_2 \ge 0, x_1 + x_2 \le t$ and $\mathbf{c}=(1,0).$ Then $f_t(t)-t \sim \sqrt{2t}$ because, roughly speaking, there are about $t$ points in the top $\sqrt{2t}$ rows of $L(t).$ The function $t \mapsto f_t(t)-t$ is not EQP, being asymptotic to $\sqrt{2t},$ so neither is $f.$ \end{remark} 

\section{The convex hull of the lattice point set}
In this section, we study the convex hull of the lattice point set of a PILP and resolve a conjecture by Calegari and Walker. This convex hull is closely related to the first optimum value function of this PILP because the optimum value of a linear objective function occurs at a vertex of this convex hull.

In 2011, Calegari and Walker proved the following as Theorem 3.5 in \cite{CW}. 

\begin{theorem} \label{CW} For $1 \le i \le k,$ let $\mathbf{v_i}$ be in $\mathbb{Q}(t)^n$ of size $O(t),$ and for all $t,$ let $R(t)$ be the convex hull of $\mathbf{v_1}(t), \ldots, \mathbf{v_k}(t).$ Then there exists a positive integer $d$ such that for $t \gg 0$ and $t$ restricted to a single residue class $\pmod{d},$ there exist $\mathbf{w_1}, \ldots, \mathbf{w_z}$ in $\mathbb{Q}[t]^n$ such that the convex hull of the set of lattice points in $R(t)$ has vertices $\mathbf{w_1}(t), \ldots, \mathbf{w_z}(t).$
\end{theorem}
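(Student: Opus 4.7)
My plan is to reduce to Theorem \ref{GF} by characterizing each vertex of the convex hull of $L(t) := R(t) \cap \mathbb{Z}^n$ as the lex-largest optimum of a suitable PILP. First I would normalize the setup: since the $\mathbf{v_i}(t) \in \mathbb{Q}(t)^n$ have size $O(t)$, clearing a common polynomial denominator and restricting $t$ to one residue class modulo some $d$ on which that denominator is nonzero lets me present $R(t)$ as a parametric polytope $\{\mathbf{x} : A(t)\mathbf{x} \le \mathbf{b}(t)\}$ with $A \in \mathbb{Z}[t]^{m \times n}$ and $\mathbf{b} \in \mathbb{Z}[t]^m$, still with $R(t)$ bounded, so Theorem \ref{GF} applies.

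Next I would characterize vertices: $\mathbf{v} \in L(t)$ is a vertex of the convex hull of $L(t)$ iff there is some $\mathbf{c} \in \mathbb{Z}^n$ for which $\mathbf{v}$ uniquely maximizes $\mathbf{c}^{\intercal} \mathbf{x}$ on $L(t)$. The $O(t)$ bound forces $R(t) \subseteq [-Mt, Mt]^n$ for a constant $M$, so vertex coordinates grow at most linearly in $t$; by a normal-fan argument the primitive facet normals of the integer convex hull can be drawn from a bounded set, yielding only finitely many candidate vertex-directions to check.

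To locate each such vertex I would iterate Theorem \ref{GF} with lex-tie-breaking. Given a candidate direction $\mathbf{c}$, the first optimum $f_1(t) = \max_{\mathbf{x} \in L(t)} \mathbf{c}^{\intercal} \mathbf{x}$ is EQP. I then form a new PILP by appending the parametric equation $\mathbf{c}^{\intercal} \mathbf{x} = f_1(t)$ and taking objective $x_1$; applying Theorem \ref{GF} again yields an EQP value $v_1(t)$. Iterating through $x_2, x_3, \ldots, x_n$ pins down an EQP point $\mathbf{w}^{\mathbf{c}}(t) \in L(t)$, the lexicographically largest optimal solution in direction $\mathbf{c}$, which is automatically a vertex of the integer convex hull. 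Comparing the EQPs $f_1(t)$ and $f_2(t)$ (both furnished by Theorem \ref{GF}) identifies the residue classes of $t$ on which this maximizer is genuinely unique.

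The main obstacle is the \emph{enumeration} step: I must show that as $\mathbf{c}$ ranges over a suitable finite set of primitive integer vectors, the resulting $\mathbf{w}^{\mathbf{c}}(t)$ produce exactly the vertex set of the convex hull of $L(t)$ for every $t \gg 0$ within one residue class. This amounts to showing that the combinatorial type of the normal fan of this integer convex hull stabilizes on each residue class. My approach would be to pick one primitive $\mathbf{c}$ in the interior of each maximal cone of the normal fan of $R(t)$ (whose combinatorics are already stable in a residue class, since $R(t)$ has polynomial vertex description), and then refine by using that any two EQPs are eventually totally ordered on each residue class, so the relative ranking of candidate optima is constant on each residue class for $t \gg 0$. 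Intersecting all the moduli arising from the various auxiliary PILPs yields the single period $d$ of the conclusion, and the outputs of the lex procedure furnish the desired polynomials $\mathbf{w_1}(t), \ldots, \mathbf{w_z}(t) \in \mathbb{Q}[t]^n$.
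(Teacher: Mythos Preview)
First, a framing remark: the paper does not give its own proof of Theorem~\ref{CW}. It is quoted as Theorem~3.5 of Calegari--Walker~\cite{CW}, and the paper only records their method in one line: the vertices of the integer hull lie within bounded distance of the bounding hyperplanes of $R(t)$ as $t\to\infty$ (the same mechanism that drives Section~\ref{S4} here). So any proposal is automatically ``different'' from what the paper does; the relevant question is whether your argument stands on its own.

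It does not, and the failure is precisely at the step you yourself label the main obstacle. Your claim that ``the primitive facet normals of the integer convex hull can be drawn from a bounded set'' is false under the stated hypotheses. Take $n=2$ and $R(t)=\operatorname{conv}\{(0,0),(t,0),(0,1)\}$: the $\mathbf v_i$ are $O(t)$, the integer hull equals $R(t)$ since the vertices are already lattice points, and the hypotenuse of that integer hull has primitive normal $(1,t)$, which is unbounded in $t$. So you cannot manufacture a finite, $t$-independent list of exposing directions by harvesting facet normals of the integer hull.

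Your fallback---sampling one $\mathbf c$ from the interior of each maximal cone of the normal fan of $R(t)$---does not repair this. That fan itself moves with $t$ (in the example above its walls pass through $(1,t)$), so there is no fixed fan to sample; and even after freezing combinatorics on a residue class you get at most $k$ directions, one per vertex of $R(t)$, while the integer hull can have strictly more vertices. Concretely, for $R(t)=\{x,y\ge 0,\ 3x+5y\le 15t+7\}$ the triangle $R(t)$ has three vertices, but the integer hull acquires a fourth vertex on the hypotenuse (e.g.\ $(4,2)$ when $t=1$); any $\mathbf c$ chosen in the interior of one of the three normal cones of $R(t)$ picks out one of the three ``corner'' lattice points and never this extra one. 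The vague ``refine by comparing EQPs'' step cannot recover a vertex that none of your objective directions ever selects. What Calegari--Walker actually use---and what your outline lacks---is the bounded-distance-from-the-boundary statement, which is the substantive geometric input ensuring that only finitely many candidate lattice points (not directions) need be tracked.
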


The number of vertices of the convex hull of the lattice point set, $z,$ need not be the same for each residue class or even positive. They proved this theorem by showing that the vertices are of bounded distance from the bounding hyperplanes as $n$ goes to $\infty,$ which is similar to our argument in Section 3. They conjectured that this theorem is still true for parametric vertices $\mathbf{v_i}$ which are not of size $O(t)$ (see 3.9 in their paper), which we now prove. 

\begin{theorem} \label {CW2} For $1 \le i \le k,$ let $\mathbf{v_i}$ be in $\mathbb{Q}(t)^n,$ and for all $t,$ let $R(t)$ be the convex hull of $\mathbf{v_1}(t), \ldots, \mathbf{v_k}(t).$ Then there exists a positive integer $d$ such that for $t \gg 0$ and $t$ restricted to a single residue class $\pmod{d},$ there exist $\mathbf{w_1}, \ldots, \mathbf{w_z}$ in $\mathbb{Q}[t]^n$ such that the convex hull of the set of lattice points in $R(t)$ has vertices $\mathbf{w_1}(t), \ldots, \mathbf{w_z}(t).$ \end{theorem}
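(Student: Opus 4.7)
The plan is to combine the base-$t$ decomposition (Proposition~\ref{bt1}) with Calegari and Walker's Theorem~\ref{CW}, which already handles the $O(t)$-size case. The strategy is to realize $R(t)$ as the feasible region of a PILP in general form, decompose its lattice point set via the base-$t$ map into finitely many pieces each arising from a polytope with $O(t)$-size vertices under an affine transformation polynomial in $t$, and then glue the resulting polynomial vertex candidates together.

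First I would write $R(t)$ as the feasible set of integer polynomial linear inequalities. For $t \gg 0$ in a suitable residue class modulo some $d_0$, the combinatorial type of $R(t) = \text{conv}(\mathbf{v_1}(t), \ldots, \mathbf{v_k}(t))$ is fixed, so $R(t)$ is cut out by a fixed list of facet inequalities with coefficients in $\mathbb{Q}(t)$; clearing denominators and fixing their signs on the residue class gives $A(t)\mathbf{x} \le \mathbf{b}(t)$ with $A \in \mathbb{Z}[t]^{m \times n}$ and $\mathbf{b} \in \mathbb{Z}[t]^m$. Thus $L(t) := R(t) \cap \mathbb{Z}^n$ is the lattice point set of a PILP in general form with $R(t)$ bounded. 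By Proposition~\ref{bounded}, there exists $r$ with $L(t) \subset (-t^r, t^r)^n$ for $t \gg 0$; translating by $(t^r, \ldots, t^r)$ and introducing slack variables as in Section~\ref{S2} puts the problem in standard form, and Proposition~\ref{bt1} then decomposes the lattice point set, via the base-$t$ bijection $\varphi_t$, as a disjoint union $\bigsqcup_\alpha \varphi_t^{-1}(L_\alpha(t))$, where each $L_\alpha(t)$ is the lattice point set of a PILP in reduced canonical form in the base-$t$ variables $\mathbf{y} = (y_{i,j})$.

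Each $L_\alpha(t)$ sits in a bounded polytope $R_\alpha(t)$ defined by constraints whose left-hand sides are integer constants and whose right-hand sides have degree at most one in $t$; by Cramer's rule the vertices of $R_\alpha(t)$ are rational functions of $t$ of magnitude $O(t)$, so Theorem~\ref{CW} applies. Refining to a residue class modulo some $d_\alpha$, for $t \gg 0$ the vertices of $\text{conv}(L_\alpha(t))$ are given by polynomial functions $\mathbf{w}_{\alpha,j}(t)$ of $t$. Now $\varphi_t^{-1}$ extends naturally to an affine map in $\mathbf{y}$ whose coefficients are polynomials in $t$, and affine maps carry the vertex set of the image convex hull into the image of the vertex set of the source; hence the vertices of $\text{conv}(\varphi_t^{-1}(L_\alpha(t))) = \varphi_t^{-1}(\text{conv}(L_\alpha(t)))$ lie in the finite polynomial family $\{\varphi_t^{-1}(\mathbf{w}_{\alpha,j}(t))\} \subset \mathbb{Q}[t]^n$.

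Finally, let $d$ be a common multiple of $d_0$ and all $d_\alpha$. On a fixed residue class modulo $d$ we have a finite family $\mathbf{u}_1(t), \ldots, \mathbf{u}_N(t) \in \mathbb{Q}[t]^n$ of polynomial candidates containing the vertex set of $\text{conv}(L(t)) = \text{conv}(\bigcup_\alpha \varphi_t^{-1}(L_\alpha(t)))$. To identify genuine vertices, for each $j$ one asks whether $\mathbf{u}_j(t)$ lies in the convex hull of the others: this is a parametric LP feasibility question in one parameter whose answer is determined by the signs of finitely many polynomials in $t$, and those signs are eventually constant on each residue class as $t \to \infty$. Hence for $t$ sufficiently large in each residue class modulo $d$, the vertex set of $\text{conv}(L(t))$ is a fixed polynomial sublist $\mathbf{w}_1(t), \ldots, \mathbf{w}_z(t)$, as required. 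The main obstacle I expect is this last combining step: while sign stability of univariate polynomials is classical, one must verify carefully that vertexhood reduces to a fixed polynomial sign pattern in $t$, for example by enumerating the possible vertex bases of the auxiliary convex-combination LP and tracking their dependence on $t$.
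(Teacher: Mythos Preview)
Your proposal is correct and follows essentially the same route as the paper: reduce to a PILP in standard form, apply the base-$t$ decomposition (Proposition~\ref{bt1}) to get finitely many reduced canonical pieces, invoke Theorem~\ref{CW} on each piece since its real-vector set has $O(t)$ vertices, pull back the resulting polynomial vertex families under the affine map $\varphi_t^{-1}$ to obtain a finite polynomial candidate set containing $M(t)$, and then argue that vertexhood among these candidates eventually stabilizes. For the step you flag as the main obstacle, the paper's Proposition~\ref{ch3} makes it concrete via Carath\'eodory's theorem: each candidate is a vertex iff it lies in no simplex on the others, and the barycentric coordinates in any fixed affinely independent subfamily are rational functions of $t$ (by Cramer's rule), so their signs stabilize.
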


Here is the general approach. We first rephrase this problem in the language of a PILP, $Q$, in standard form. Proposition \ref{bt1} then applies and relates this PILP with the disjoint union of finitely many PILPs, $Q_{\alpha}$, in reduced canonical form via a map $\varphi_t,$ a map whose inverse is affine. Theorem \ref{CW} is shown to apply to PILPs in reduced canonical form, so the convex hull of the $L_{\alpha}(t)$ have EQP structure. The next main step is to prove that the convex hull of $L(t)$ is related in a certain way to $\varphi^{-1}$ applied to the convex hulls of $L_{\alpha}(t).$ This reduces the task of proving Theorem \ref{CW2} to a question purely about the convex hull of a finite set of polynomial vectors, Proposition \ref{ch3}.

\begin{proposition} To prove Theorem \ref{CW2}, it suffices to show that the vertices of the convex hull of the lattice point set of a PILP in standard form has eventually quasi-polynomial structure. (See the second sentence of Theorem \ref{CW2}.)\end{proposition}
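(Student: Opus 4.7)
The plan is to recast $R(t) \cap \mathbb{Z}^n$ as the lattice point set of a PILP in standard form, up to an affine bijection whose entries are polynomials in $t$, and then push the assumed EQP description of the vertices of the convex hull back along this bijection.

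First, I would show that for $t \gg 0$, the polytope $R(t) = \mathrm{conv}(\mathbf{v_1}(t), \ldots, \mathbf{v_k}(t))$ admits a polynomial facet description $R(t) = \{\mathbf{x} \in \mathbb{R}^n : A(t) \mathbf{x} \le \mathbf{b}(t)\}$ with $A \in \mathbb{Z}[t]^{m \times n}$ and $\mathbf{b} \in \mathbb{Z}[t]^m$. Since the $\mathbf{v_i}$ lie in $\mathbb{Q}(t)^n$, for $t \gg 0$ no denominator vanishes. The combinatorial type of $R(t)$ is determined by finitely many sign and vanishing conditions on polynomial expressions in $t$---whether a given $n$-subset of vertices is affinely independent, and whether the remaining vertices lie on a single side of its affine hull---and each such condition stabilizes for $t \gg 0$. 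Consequently, a fixed family of $n$-subsets indexes the facets for all large $t$; each corresponding hyperplane equation $\alpha(t)^\intercal \mathbf{x} = \beta(t)$ has coefficients in $\mathbb{Q}(t)$, and clearing denominators yields $\alpha \in \mathbb{Z}[t]^n$, $\beta \in \mathbb{Z}[t]$. Orienting each inequality by the eventually constant sign of $\alpha(t)^\intercal \mathbf{v_i}(t) - \beta(t)$ at vertices off the facet produces the desired description and realizes $R(t)$ as the bounded real-vector set of a PILP in general form.

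Next, I would apply the general-to-standard reduction of Section \ref{S2}. Proposition \ref{bounded} supplies an integer $r$ such that every lattice point of $R(t)$ has coordinates in $[-t^r, t^r]$ for $t \gg 0$. Translating the indeterminates by $(t^r, \ldots, t^r)^\intercal$ gives an equivalent PILP in canonical form, and adding slack variables produces a PILP $Q^*$ in standard form whose lattice point set $L^*(t)$ is in bijection with $L(t) := R(t) \cap \mathbb{Z}^n$ via the map $\mathbf{x} \mapsto \bigl(\mathbf{x} + (t^r, \ldots, t^r)^\intercal,\, \mathbf{b}(t) - A(t)\mathbf{x}\bigr)$. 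Both this map and its inverse (project to the first $n$ coordinates and translate back) are affine in $\mathbf{x}$ with coefficients in $\mathbb{Z}[t]$.

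Finally, invoke the assumption applied to $Q^*$: there is an integer $d$ such that for each residue class $s \pmod{d}$ and $t \gg 0$ in that class, the vertices of $\mathrm{conv}(L^*(t))$ are $\mathbf{u_1}(t), \ldots, \mathbf{u_z}(t)$ for some $\mathbf{u_j} \in \mathbb{Q}[t]^{n+m}$. Because bijective affine maps carry convex hulls to convex hulls and vertices bijectively to vertices, applying the polynomial-affine inverse of the bijection above to each $\mathbf{u_j}$ yields polynomial vectors $\mathbf{w_j}(t) \in \mathbb{Q}[t]^n$ whose values at each large $t \equiv s \pmod{d}$ are exactly the vertices of $\mathrm{conv}(L(t))$---precisely the conclusion of Theorem \ref{CW2}. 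The main obstacle is the first step: producing a polynomial facet description of the parametric polytope from its parametric vertex description; the later steps are essentially packaging together the Section \ref{S2} machinery with the elementary observation that affine bijections preserve vertex structure.
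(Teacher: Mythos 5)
Your proposal is correct and follows essentially the same route as the paper: convert the parametric vertex description to a polynomial facet description, pass to canonical and then standard form via the Section~\ref{S2} machinery, and transport the vertex structure back through a polynomial-affine bijection. The only real difference is at the first step, where you sketch the V-to-H conversion directly (stabilization of combinatorial type, clearing denominators) whereas the paper simply cites Section~2.1 of \cite{CLS} for that conversion after first translating into the first orthant; your ordering (facet description first, then translation via Proposition~\ref{bounded}) is a harmless rearrangement of the same ingredients.
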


\begin{proof} Let $\mathbf{v_1}, \ldots, \mathbf{v_k}$ be in $\mathbb{Q}(t)^n.$ Because rational functions are bounded by polynomials for $t \gg 0$, we can translate these parametric vertices by some polynomial vector so that $R^*(t)$ lies in the first orthant for $t \gg 0.$ One can show that $R(t)$ coincides with the real vector set of some PILP in canonical form. See Section 2.1 of \cite{CLS}. Since $R^*(t)$ and $R(t)$ are translations by a polynomial vector, it suffices to show that the vertices of the convex hull of the lattice point set of a PILP in canonical form has eventually quasi-polynomial structure.

The bijection between the lattice point sets of a PILP in canonical form and a PILP in standard form from Section $\ref{S2}$ extends to a bijective affine transformation (the codomain is the hyperplane of the PILP in standard form). Bijective affine transformations preserve convex combinations and send polynomial vectors to polynomial vectors. Therefore, the vertices of the lattice point sets of two PILPs are in bijection by this same affine transformation, and it suffices to consider a PILP in standard form. \end{proof}

Consider a PILP in standard form, $Q$, with regions $R(t)$ and $L(t).$ There exists $r$ such that $R(t)$ bounded in magnitude by $t^r$ for $t \gg 0.$ Define PILPs $Q_{\alpha},$ etc. as in Proposition \ref{bt1}. In this proposition, we showed that for $t \gg 0,$ the map 
\begin{displaymath} \varphi_t: L(t) \rightarrow \sqcup_{\alpha \in S} L_{\alpha}(t) \end{displaymath} 
given by $\varphi_t(\mathbf{x}) = \mathbf{y}$ with $x_i = \sum_{j=1}^r y_{i, j} t^{j-1}$ is a bijection. 


Let $M(t)$ be the vertices of the convex hull of $L(t)$, and define $M_{\alpha}(t)$ similarly. 

\begin{proposition} \label{chs} Fix $t \gg 0$ so that $\varphi_t$ is a bijection. The image of $M(t)$ under $\varphi_t$ lies in $\sqcup_{\alpha} M_{\alpha}(t).$ \end{proposition}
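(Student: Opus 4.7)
The plan is to argue by contrapositive using the fact that $\varphi_t^{-1}$ is a linear map. Fix $t \gg 0$ so that $\varphi_t$ is a bijection from $L(t)$ to $\sqcup_\alpha L_\alpha(t)$. Take any $\mathbf{x} \in M(t)$, and let $\mathbf{y} = \varphi_t(\mathbf{x})$; since $\varphi_t(\mathbf{x})$ lies in exactly one $L_\alpha(t)$, there is a unique $\alpha$ with $\mathbf{y} \in L_\alpha(t)$. My goal is to show $\mathbf{y} \in M_\alpha(t)$.

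The key observation is that $\varphi_t^{-1}$, viewed as a map $\mathbb{R}^{rn} \to \mathbb{R}^n$, is linear (for fixed $t$): the formula $\varphi_t^{-1}((y_{i,j})) = \bigl(\sum_{j=0}^{r-1} y_{i,j} t^j\bigr)_{i=1}^n$ is just multiplication by a fixed matrix of powers of $t$. In particular, $\varphi_t^{-1}$ preserves convex combinations.

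Suppose for contradiction that $\mathbf{y} \notin M_\alpha(t)$. Since $\mathbf{y} \in L_\alpha(t)$ and $L_\alpha(t)$ is a finite set, a standard fact about vertices of convex hulls of finite sets says that $\mathbf{y}$ can be written as a non-trivial convex combination of other points in $L_\alpha(t)$: there exist $\mathbf{y}_1, \ldots, \mathbf{y}_s \in L_\alpha(t) \setminus \{\mathbf{y}\}$ and $\lambda_1, \ldots, \lambda_s > 0$ with $\sum_i \lambda_i = 1$ and $\mathbf{y} = \sum_i \lambda_i \mathbf{y}_i$. Applying the (convex-combination-preserving) map $\varphi_t^{-1}$ gives
\[
\mathbf{x} = \varphi_t^{-1}(\mathbf{y}) = \sum_{i=1}^s \lambda_i \varphi_t^{-1}(\mathbf{y}_i).
\]
Each $\varphi_t^{-1}(\mathbf{y}_i)$ lies in $L(t)$ because $\mathbf{y}_i \in \sqcup_\alpha L_\alpha(t)$ and $\varphi_t$ is a bijection between $L(t)$ and $\sqcup_\alpha L_\alpha(t)$, and these points are all distinct from $\mathbf{x}$ because $\varphi_t$ is injective and $\mathbf{y}_i \neq \mathbf{y}$. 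Hence $\mathbf{x}$ is a non-trivial convex combination of points of $L(t) \setminus \{\mathbf{x}\}$, contradicting $\mathbf{x} \in M(t)$.

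There is no real obstacle here beyond making sure $\varphi_t^{-1}$ is genuinely linear (it is, by its explicit formula) and that bijectivity of $\varphi_t$ lets one transport non-trivial convex combinations back to $L(t)$. The heavier content of Theorem \ref{CW2} is in the companion proposition that takes vertex families of each $Q_\alpha$ and reassembles them into polynomial vertices of $\text{conv}(L(t))$; Proposition \ref{chs} merely guarantees that the vertices we need are already accounted for among the $M_\alpha(t)$.
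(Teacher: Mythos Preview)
Your proof is correct and follows essentially the same approach as the paper: both argue by contrapositive, using that $\varphi_t^{-1}$ (being linear/affine) preserves convex combinations, so a non-vertex on the $\mathbf{y}$-side pulls back to a non-vertex on the $\mathbf{x}$-side. The only cosmetic difference is that the paper phrases the contrapositive in terms of the full union $\sqcup_\alpha L_\alpha(t)$ and then observes that a vertex of the union's convex hull must be a vertex of whichever $L_\alpha(t)$ contains it, whereas you work directly inside the single $L_\alpha(t)$ containing $\mathbf{y}$; your route is marginally more direct.
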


\begin{proof}
Suppose that a point $p$ in $\sqcup_{\alpha} L_{\alpha}(t)$ is a convex combination of other points in $\sqcup_{\alpha} L_{\alpha}(t),$ say $\sum_i c_i p_i$ where for all $i,$ $c_i \ge 0,$ $p \neq p_i \in \sqcup_{\alpha} L_{\alpha}(t),$ and $\sum_i c_i = 1.$ It is easy to see that $\varphi_t^{-1}$ preserves this convex combination:
\begin{displaymath} \varphi_t^{-1}(p)= \sum_i c_i \varphi_t^{-1}(p_i). \end{displaymath}
A point in $L(t)$ is not a convex combination of other points in $L(t)$ if and only if it is in $M(t)$. By the contrapositive of the above observation, the image under $\varphi_t$ of an element of $M(t)$ is a vertex of the convex hull of $\sqcup_{\alpha} L_{\alpha}(t),$ so it is in one of the sets $M_{\alpha}(t).$
\end{proof}

To understand $M_{\alpha}(t),$ we wish to apply Theorem \ref{CW} to $R_{\alpha}(t).$ Each bounding hyperplane of $R_{\alpha}(t)$ has the form $\mathbf{a}^{\intercal} \mathbf{y} = b(t),$ where $\mathbf{a}$ is in $\mathbb{Z}^{rn}$ and $b$ has degree at most 1. As in Proposition \ref{bounded}, for all $t,$ each vertex of $R_{\alpha}(t)$ is the intersection of $rn$ of the bounding parametric hyperplanes (at $t$) which intersect at a single point. A size $rn$ subset of the parametric hyperplanes has unique intersection if and only if their left hand sides form an invertible matrix, and this is independent of $t.$ Their intersection is the inverse of this matrix times the vector of the right hand side. Since the matrix has integer entries, the intersection has the form $\mathbf{v_{1, A}} t + \mathbf{v_{2, A}}$ where $\mathbf{v_{1, A}}, \mathbf{v_{2, A}}$ are in $\mathbb{Q}^{rn}$ i.e. is of size $O(t).$

\begin{proposition} \label{cand} Fix $t.$ A candidate intersection $\mathbf{v_{1, A}} t + \mathbf{v_{2, A}}$ is a vertex of $R_{\alpha}(t)$ if and only if it satisfies all of the parametric inequalities at $t.$ \end{proposition}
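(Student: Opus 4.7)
The proposition is a biconditional, and I will treat each direction separately. The forward direction is essentially tautological: if the candidate intersection $\mathbf{v_{1,A}}t + \mathbf{v_{2,A}}$ is a vertex of $R_\alpha(t)$, then in particular it belongs to $R_\alpha(t)$, and every point of $R_\alpha(t)$ satisfies all the parametric inequalities that cut out $R_\alpha(t)$ at $t$.

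The content is the converse, and here my plan is to invoke a standard characterization of vertices of a polyhedron: a point $\mathbf{p}$ of $\{\mathbf{y} \in \mathbb{R}^N : A' \mathbf{y} \le \mathbf{b}'\}$ is a vertex if and only if $\mathbf{p}$ lies in the polyhedron and the rows of $A'$ indexing the constraints active at $\mathbf{p}$ span $\mathbb{R}^N$. In our setting the ambient dimension is $N = rn$, since $Q_\alpha$ has indeterminates $y_{i,j}$ for $1 \le i \le n$ and $0 \le j < r$. By the very construction preceding the proposition, $\mathbf{v_{1,A}}t + \mathbf{v_{2,A}}$ was obtained as the unique common solution of $rn$ bounding parametric hyperplane equations whose left-hand sides form an invertible $rn \times rn$ integer matrix; those $rn$ equations are active there by definition, and the invertibility of that matrix is precisely the statement that the $rn$ active left-hand-side vectors are linearly independent. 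Hence, if the candidate also satisfies every remaining parametric inequality at $t$, then it lies in $R_\alpha(t)$ and has a full-rank collection of active constraints, so the vertex characterization gives that it is a vertex.

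The only bookkeeping needed is to confirm that the ambient dimension really equals the number $rn$ of equations used to pin down a candidate intersection, and that the $rn$ active left-hand-side vectors at such a candidate are automatically linearly independent by the rule used to select them. Both points are already built into the construction in the paragraph preceding the proposition, so there is no substantive obstacle; the proof amounts to a short unpacking of the vertex characterization.
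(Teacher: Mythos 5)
Your proof is correct and rests on the same underlying fact as the paper's, namely that the $rn$ defining hyperplanes through a candidate intersection have linearly independent normals (equivalently, meet at a unique point). The only difference is stylistic: you prove the converse directly via the rank-of-active-constraints characterization of vertices, whereas the paper argues by contrapositive via the existence-of-a-feasible-direction characterization; these are two standard and equivalent formulations.
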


\begin{proof} If a candidate intersection $\mathbf{v_{1, A}} t + \mathbf{v_{2, A}}$ is a vertex of $R_{\alpha}(t),$ it lies in $R_{\alpha}(t),$ so it satisfies all of the parametric inequalities. 

Conversely, suppose that a candidate intersection is not a vertex of $R_{\alpha}(t).$ One case is that it lies in $R_{\alpha}(t)$. We use a standard characterization of vertices of a polytope: there exists a vector $\mathbf{r}$ such that $\mathbf{v_{1, A}} t + \mathbf{v_{2, A}} + s \mathbf{r}$ lies in $R_{\alpha}(t)$ for all reals $s$ in some neighborhood of $0.$

By construction, $\mathbf{v_{1, A}} t + \mathbf{v_{2, A}}$ lies on $rn$ hyperplanes which meet at exactly one point (and possibly other hyperplanes). Therefore, at least one of these hyperplanes is not fixed by addition by $\mathbf{r}.$ By definition, $R_{\alpha}(t)$ excludes all of the points on one half space determined by this hyperplane, which contradicts the previous paragraph. Therefore, the candidate does not lie in $R_{\alpha}(t),$ and the candidate does not satisfy all of the parametric inequalities, as desired.
\end{proof}

\begin{proposition} \label{CW3} Theorem \ref{CW} applies to $R_{\alpha}(t).$ \end{proposition}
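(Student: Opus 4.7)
The plan is to verify the hypotheses of Theorem \ref{CW} for $R_\alpha(t)$ by exhibiting a finite list of parametric vectors $\mathbf{v_1}, \ldots, \mathbf{v_k} \in \mathbb{Q}[t]^{rn}$, each of size $O(t)$, whose convex hull equals $R_\alpha(t)$ for all sufficiently large $t$. Since the reduced canonical form carries with it the constraints $0 \le y_{i,j} < t$, each $R_\alpha(t)$ is a bounded polytope and therefore coincides with the convex hull of its own vertices, so it suffices to parametrize the vertex set.

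First I would enumerate candidate vertices. By the discussion preceding Proposition \ref{cand}, every vertex of $R_\alpha(t)$ is the unique intersection of some size-$rn$ subset of the bounding parametric hyperplanes whose left-hand sides form an invertible integer matrix. Index these subsets by $A$ in a finite set $\mathcal{S}$; for each $A$, the intersection is $\mathbf{v_A}(t) = \mathbf{v_{1,A}}\, t + \mathbf{v_{2,A}}$ with $\mathbf{v_{1,A}}, \mathbf{v_{2,A}} \in \mathbb{Q}^{rn}$, so $\mathbf{v_A} \in \mathbb{Q}[t]^{rn}$ and has size $O(t)$.

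Next I would apply Proposition \ref{cand} to filter out the spurious candidates. The criterion is that $\mathbf{v_A}(t)$ satisfies every parametric inequality $\mathbf{a}^{\intercal} \mathbf{y} \le b(t)$ defining $R_\alpha(t)$. Substituting $\mathbf{y} = \mathbf{v_A}(t)$ yields a univariate polynomial inequality in $t$ of degree at most $1$, which is either eventually true or eventually false in $t$. Since $\mathcal{S}$ and the list of inequalities are both finite, a single threshold $T$ works for all of them, and the set $\mathcal{S}_* \subseteq \mathcal{S}$ of candidates that pass every test is independent of $t$ for $t > T$.

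Finally, for $t > T$, both directions of Proposition \ref{cand} give the vertex set of $R_\alpha(t)$ as $\{\mathbf{v_A}(t) : A \in \mathcal{S}_*\}$, and so $R_\alpha(t)$ is the convex hull of this finite polynomial family; Theorem \ref{CW} then applies. I do not foresee a substantive obstacle here — the argument is essentially that the degree-$1$ structure of the constraints in reduced canonical form propagates linearly to the vertices — but the step deserving the most care is the sign analysis of the polynomial inequalities, together with the observation that Proposition \ref{cand} runs in both directions so that $\mathcal{S}_*$ genuinely captures the vertex set rather than merely an upper bound for it. A minor bookkeeping point is that Theorem \ref{CW} is phrased with ``for all $t$,'' but since its conclusion is only asserted for $t \gg 0$, the discrepancy for $t \le T$ is harmless.
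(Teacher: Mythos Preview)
Your proposal is correct and follows essentially the same approach as the paper: both use the candidate intersections and Proposition \ref{cand} to show that the vertex set of $R_\alpha(t)$ stabilizes to a fixed finite family of degree-$1$ polynomial vectors for $t \gg 0$, whence Theorem \ref{CW} applies. Your write-up is more explicit about the filtering step and the threshold $T$, and you flag the harmless ``for all $t$'' versus ``$t \gg 0$'' mismatch, but the underlying argument is the same.
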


\begin{proof} Proposition \ref{cand} gives us a description of the vertices of $R_{\alpha}(t).$ A parametric inequality is the comparison of two (linear) polynomials, and the comparison of two polynomials stabilizes for $t \gg 0.$ Since there are finitely many parametric inequalities and candidate intersections, for $t \gg 0,$ all of the comparisons stabilise, and each candidate is either eventually a vertex of $R_{\alpha}(t)$ or eventually not a vertex. 

For all $t,$ $R_{\alpha}(t)$ is convex, so for $t \gg 0,$ $R_{\alpha}(t)$ is the convex hull of the candidates which are eventually vertices. Each candidate is of size $O(t),$ so Theorem \ref{CW} applies. \end{proof}

\begin{proposition} \label{ch2} Fix $t \gg 0$ so that $\varphi_t$ is a bijection. Let
\begin{displaymath}T(t) := \bigcup_{\alpha \in S} \varphi_t^{-1}(M_{\alpha}(t)),\end{displaymath}
the union of the preimages of $M_{\alpha}(t).$ For $t \gg 0,$ the vertices of the convex hull of $T(t)$ is exactly $M(t).$ \end{proposition}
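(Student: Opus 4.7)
The plan is to show two inclusions of sets that together force the vertices of $\mathrm{conv}(T(t))$ to coincide with $M(t).$ First I would verify the chain
\[ M(t) \subset T(t) \subset L(t). \]
The right inclusion is essentially by construction: each $M_{\alpha}(t) \subset L_{\alpha}(t)$, and $\varphi_t^{-1}$ restricted to $\sqcup_{\alpha} L_{\alpha}(t)$ lands in $L(t)$ by the bijection property established when defining $\varphi_t.$ The left inclusion is exactly the content of Proposition \ref{chs}: for a point $p \in M(t)$, Proposition \ref{chs} says $\varphi_t(p) \in \sqcup_{\alpha} M_{\alpha}(t)$, so $p = \varphi_t^{-1}(\varphi_t(p)) \in T(t).$

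The second step is purely convex-geometric. Since $M(t)$ is the vertex set of $\mathrm{conv}(L(t))$, we have $\mathrm{conv}(M(t)) = \mathrm{conv}(L(t)).$ Combining this with the inclusions above yields
\[ \mathrm{conv}(M(t)) \subset \mathrm{conv}(T(t)) \subset \mathrm{conv}(L(t)) = \mathrm{conv}(M(t)), \]
so all three convex hulls coincide as polytopes. Call this common polytope $P(t).$

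For the final step, I would use the standard fact that for a finite set $S$, the vertices of $\mathrm{conv}(S)$ are exactly the extreme points of the polytope $\mathrm{conv}(S)$, and these are automatically elements of $S.$ Applied to $S = T(t)$, the vertices of $\mathrm{conv}(T(t))$ are the extreme points of $P(t)$ that lie in $T(t).$ But the set of extreme points of $P(t)$ depends only on $P(t)$, and applied to $S = L(t)$, this set is precisely $M(t).$ Since $M(t) \subset T(t)$ by the first step, every extreme point of $P(t)$ is automatically in $T(t)$, so the vertices of $\mathrm{conv}(T(t))$ equal $M(t).$

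I do not expect a serious obstacle here; the only care required is in checking that ``vertex'' is being used in the sense of extreme point, which behaves well under enlarging the generating set as long as one stays inside the same polytope. The real content has already been packaged into Proposition \ref{chs} (to get $M(t) \subset T(t)$) and into the definition of $M(t)$ (to identify the extreme points of $P(t)$); the proof of this proposition is really just a bookkeeping step bridging the partition-and-transform picture of Section 3 with the convex-hull language needed to invoke the EQP statement for the $M_{\alpha}(t).$
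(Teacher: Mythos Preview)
Your proposal is correct and follows essentially the same approach as the paper: both establish the chain $M(t)\subset T(t)\subset L(t)$ via Proposition~\ref{chs} and the bijection, then use that the vertex set of a polytope is intrinsic (the paper phrases one direction as a direct argument and the other as a short contradiction, while you package both into the equality $\mathrm{conv}(T(t))=\mathrm{conv}(L(t))$ and the extreme-point characterization). No substantive difference.
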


\begin{proof}

By Proposition \ref{chs}, the image of $M(t)$ under $\varphi_t$ lies in $\sqcup_{\alpha} M_{\alpha}(t)$, so each element of $M(t)$ is in $T(t).$ Each element of $T(t)$ is an element of $L(t)$ because its image under $\varphi_t$ lies in one of the sets $L_{\alpha}(t).$  Each element of $M(t)$ is a vertex of the convex hull of $L(t)$ and lies in $T(t),$ so it is a vertex of the convex hull of $T(t).$ 

Assume, for the sake of contradiction, that some vector $\mathbf{v}$ is a vertex of the convex hull of $T(t)$ but not in $M(t).$ Then $\mathbf{v}$ is in $L(t)$ but not in $M(t)$, so it is a convex combination of the elements of $M(t).$ Because $M(t)$ is a subset of $T(t),$ $\mathbf{v}$ in $T(t)$ is a convex combination of other elements of $T(t),$ a contradiction. Therefore, for $t \gg 0,$ the vertices of the convex hull of $T(t)$ is exactly $M(t).$ 
\end{proof}

By Proposition \ref{CW3}, for each $\alpha,$ there is a positive integer $d$ such that when $t$ is restricted to a residue class $\text{mod }d,$ the number of vertices of the convex hull of $L_{\alpha}(t)$ is eventually constant, and they are eventually given by polynomial vectors. Since there are finitely many $\alpha,$ we can find $d$ that satisfies this property for all $\alpha.$ 

Restrict $t$ to a single residue class $\text{mod }d.$ By Theorem \ref{CW}, for $t \gg 0,$ we can write $M_{\alpha}(t)$ as $\{\mathbf{y_{\alpha, \beta}} (t)  \mid \beta \in S_{\alpha}\}$ where $S_{\alpha}$ is finite and $\mathbf{y_{\alpha, \beta}}$ are distinct vectors in $\mathbb{R}[t]^n.$ Then for $t \gg 0$ (and $t$ restricted to the residue class), $T(t)$ is a finite set of polynomial vectors.

By Proposition \ref{ch2}, for $t \gg 0,$ the convex hull of $T(t)$ is exactly $M(t).$ It turns out that we no longer need the details of $Q.$ Theorem \ref{CW2} can be proven with the following proposition.

\begin{proposition} \label{ch3} Let $\mathbf{w_1}, \ldots, \mathbf{w_p}$ be distinct elements of $\mathbb{R}[t]^n.$ There exists a subset $U$ of $\{1, \ldots, p\}$ such that for $t \gg 0,$ $\{\mathbf{w_h}(t) \mid h \in U \}$ is the set of vertices of the convex hull of $\mathbf{w_1}(t), \ldots, \mathbf{w_p}(t)$ with no repeats. \end{proposition}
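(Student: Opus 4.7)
The plan is to reduce the proposition to the eventual stabilization of signs of rational functions of $t$, using Carath\'{e}odory's theorem to sidestep the bookkeeping for degenerate configurations.

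First I eliminate duplicates: for any $h \ne k,$ the identity $\mathbf{w_h}(t) = \mathbf{w_k}(t)$ either holds for all $t$ (when $\mathbf{w_h} = \mathbf{w_k}$ as polynomial vectors) or for only finitely many $t,$ since a nonzero polynomial has finitely many roots. I partition $\{1, \ldots, p\}$ by polynomial equality, choose one representative from each class, and obtain $V \subseteq \{1, \ldots, p\}$ on which the $\mathbf{w_h}(t)$ are pairwise distinct for $t \gg 0.$ The convex hull of the full set and that of the reduced set agree for $t \gg 0,$ so it suffices to find $U \subseteq V$ satisfying the claim.

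For each $h \in V,$ I argue that the property ``$\mathbf{w_h}(t)$ is a vertex of $\mathrm{conv}\{\mathbf{w_i}(t) : i \in V\}$'' is eventually constant in $t.$ Equivalently, the membership $\mathbf{w_h}(t) \in \mathrm{conv}\{\mathbf{w_i}(t) : i \in V,\, i \ne h\}$ stabilizes. By Carath\'{e}odory's theorem, this membership holds if and only if there exists $T \subseteq V \setminus \{h\}$ with $|T| \le n+1$ such that the points $\mathbf{w_i}(t)$ for $i \in T$ are affinely independent and $\mathbf{w_h}(t) \in \mathrm{conv}\{\mathbf{w_i}(t) : i \in T\}.$ There are only finitely many candidate subsets $T,$ so it suffices to prove eventual constancy for each fixed $T.$ Affine independence of these points amounts to some $|T| \times |T|$ minor of the $|T| \times (n+1)$ matrix with rows $(1, \mathbf{w_i}(t))$ being nonzero; each such minor is a polynomial in $t,$ hence either identically zero or eventually nonzero, so affine independence stabilizes. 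When it holds eventually, Cramer's rule expresses the barycentric coordinates $\lambda_j(t)$ of $\mathbf{w_h}(t)$ with respect to the $\mathbf{w_i}(t)$ as rational functions of $t,$ and the membership condition ``$\lambda_j(t) \ge 0$ for all $j$'' stabilizes because the sign of a rational function of $t$ is eventually constant. Setting $U := \{h \in V : \mathbf{w_h}(t) \text{ is a vertex for all } t \gg 0\}$ then satisfies the proposition.

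The main obstacle is handling affinely dependent configurations cleanly, which is precisely why Carath\'{e}odory is essential: it restricts membership testing to affinely independent subsets, where barycentric coordinates exist as honest rational functions of $t.$ An alternative, more structural route would be to observe that the set $\{t \in \mathbb{R} : \mathbf{w_h}(t) \text{ is a vertex}\}$ is semialgebraic (membership in the convex hull of polynomial points is a first-order formula in $t$ with polynomial data, and Tarski--Seidenberg eliminates the existential quantifiers over the coefficients of the convex combination), and any semialgebraic subset of $\mathbb{R}$ is a finite union of points and intervals, hence eventually constant.
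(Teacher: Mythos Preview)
Your proof is correct and follows essentially the same route as the paper: Carath\'eodory's theorem to reduce vertex-testing to affinely independent subsets, polynomial minors to stabilize affine (in)dependence, and rational-function barycentric coordinates to stabilize convex membership. One small imprecision worth patching: when $|T| \le n$, Cramer's rule only produces barycentric coordinates if $\mathbf{w_h}(t)$ already lies in the affine span of $\{\mathbf{w_i}(t) : i \in T\}$; the paper handles this by first checking whether $\{\mathbf{w_i}(t) : i \in T\} \cup \{\mathbf{w_h}(t)\}$ is affinely dependent---another polynomial-minor condition, hence eventually constant---and your argument needs that one-line addition before invoking Cramer.
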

\begin{corollary} Theorem \ref{CW2}\end{corollary}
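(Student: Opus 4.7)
My plan is to fix a candidate subset $U \subseteq \{1, \ldots, p\}$, namely the set of indices $h$ for which $\mathbf{w_h}(t)$ is a vertex of $\operatorname{conv}(\mathbf{w_1}(t), \ldots, \mathbf{w_p}(t))$ for all sufficiently large $t$, and then to verify that for $t \gg 0$ this $U$ satisfies the conclusion of the proposition. Before arguing that $U$ captures the full vertex set, I note that the evaluations $\mathbf{w_h}(t)$ for $h \in U$ are pairwise distinct for $t \gg 0$: since the $\mathbf{w_i}$ are distinct as polynomial vectors, for each pair $i \neq j$ some coordinate of $\mathbf{w_i} - \mathbf{w_j}$ is a nonzero polynomial in $t$ and therefore has only finitely many roots, giving $\mathbf{w_i}(t) \neq \mathbf{w_j}(t)$ eventually.

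The core step is to show that for each fixed $i$, the predicate ``$\mathbf{w_i}(t)$ is a vertex of $\operatorname{conv}(\mathbf{w_1}(t), \ldots, \mathbf{w_p}(t))$'' has a stable truth value for $t \gg 0$. I would argue via Carath\'eodory's theorem: $\mathbf{w_i}(t)$ fails to be a vertex iff there exist an affinely independent subset $T \subseteq \{1, \ldots, p\} \setminus \{i\}$ with $|T| \leq n+1$ and nonnegative $c_j$ summing to $1$ such that $\mathbf{w_i}(t) = \sum_{j \in T} c_j \mathbf{w_j}(t)$. For each fixed $T$, affine independence of $\{\mathbf{w_j}(t) : j \in T\}$ reduces to the nonvanishing of a particular polynomial-in-$t$ minor, so it either fails identically or holds for all $t \gg 0$. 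When it generically holds, the condition that $\mathbf{w_i}(t)$ lies in the affine hull is again a polynomial-in-$t$ equation and stabilizes; and when that too holds, the coefficients $c_j(t)$ are uniquely determined by Cramer's rule and are rational functions, whose signs stabilize for $t \gg 0$. Taking a finite union over all $T$ shows the ``non-vertex'' predicate stabilizes, and hence so does ``$\mathbf{w_i}(t)$ is a vertex.''

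With $U$ defined as above and $t$ taken larger than the finitely many stabilization thresholds produced by the preceding two steps, the vertex set of $\operatorname{conv}(\mathbf{w_1}(t), \ldots, \mathbf{w_p}(t))$ is precisely $\{\mathbf{w_h}(t) : h \in U\}$ with no repetitions, completing the proof. The main obstacle is the casework embedded in the central step: one must separate generically affinely independent subsets $T$ from generically dependent ones and, in the independent case, identify the $c_j(t)$ as rational functions whose signs eventually stabilize. Each individual ingredient (polynomial signs eventually constant, rational-function signs eventually constant, and Carath\'eodory) is standard, but assembling them so that ``vertex-ness'' is manifestly a finite Boolean combination of eventually-stable predicates is where the bookkeeping lies.
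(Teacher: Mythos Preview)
Your argument is a correct proof of Proposition~\ref{ch3}, and indeed it follows the paper's own proof of that proposition quite closely (Carath\'eodory plus stabilization of affine-independence and of the barycentric coefficients, assembled over the finitely many candidate subsets). However, the statement you were asked to prove is the Corollary, whose content is Theorem~\ref{CW2}: given $\mathbf{v_1},\ldots,\mathbf{v_k}\in\mathbb{Q}(t)^n$ and $R(t)=\operatorname{conv}(\mathbf{v_1}(t),\ldots,\mathbf{v_k}(t))$, one must show that the vertices of the convex hull of the \emph{lattice points} in $R(t)$ have EQP structure. Your write-up never mentions lattice points, PILPs, or any of the relevant reductions; it proves a purely polynomial-geometry lemma about finitely many polynomial vectors, which is only the final ingredient.

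What the paper actually does for the Corollary is invoke the chain of reductions built up in the section: pass from $R(t)$ to a PILP in standard form; apply the base-$t$ decomposition (Proposition~\ref{bt1}) to obtain finitely many PILPs $Q_\alpha$ in reduced canonical form whose lattice point sets are related to $L(t)$ via the affine map $\varphi_t^{-1}$; apply Theorem~\ref{CW} (the $O(t)$ case) to each $R_\alpha(t)$ to conclude that the vertex sets $M_\alpha(t)$ are given by finitely many polynomial vectors on each residue class mod some $d$; use Proposition~\ref{ch2} to identify $M(t)$ with the vertex set of the convex hull of the finite set $T(t)=\bigcup_\alpha \varphi_t^{-1}(M_\alpha(t))$; and only then feed $T(t)$ into Proposition~\ref{ch3}. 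None of these steps appears in your proposal, so as a proof of the Corollary it has a genuine gap: Proposition~\ref{ch3} alone does not touch the lattice-point structure that Theorem~\ref{CW2} is about.
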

\begin{proof} We have reduced the proof of Theorem \ref{CW2} to showing that the vertices of the convex hull of the lattice point set of a PILP in standard form has EQP structure. For $t \gg 0,$ $M(t)$ equals the convex hull of $T(t).$ For $t$ restricted to a residue class $\text{mod }d$ and $t \gg 0,$ there is a finite set of polynomial vectors that gives $T(t).$ We can eliminate the identical polynomial vectors. By Proposition \ref{ch3}, for $t \gg 0$ (and $t$ restricted to this residue class), the set of vertices of the convex hull of $T(t)$ is a fixed subset of polynomial vectors. This is true for all single residue classes $\text{mod }d,$ so $M(t)$ has the desired EQP structure $\text{mod }d.$
\end{proof}

 We need a few facts to prove Proposition \ref{ch3}.

\begin{theorem}[{Carath\'eodory's Theorem \cite{Car}}]
A point that lies in the convex hull of a subset $P$ of $\mathbb{R}^d$ lies in a simplex with vertices in $P.$ \end{theorem}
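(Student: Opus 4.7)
The plan is to prove Carathéodory's theorem by minimizing the length of a convex combination. First, unpack the hypothesis: $x$ in the convex hull of $P$ means $x = \sum_{i=1}^{k} \lambda_i p_i$ for some positive integer $k$, points $p_1, \ldots, p_k \in P$, and strictly positive weights $\lambda_i$ summing to $1$ (any zero-weight term may be discarded at the start). Among all such representations of $x$, fix one with $k$ minimal. The aim is to show $p_1, \ldots, p_k$ are affinely independent, which forces $k - 1 \le d$ and exhibits $x$ inside the simplex spanned by $p_1, \ldots, p_k$, whose vertices lie in $P$ as required.

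The key step is to rule out affine dependence of $p_1, \ldots, p_k$. Suppose, for contradiction, there exist scalars $\mu_1, \ldots, \mu_k$, not all zero, with $\sum_i \mu_i = 0$ and $\sum_i \mu_i p_i = \mathbf{0}$. Since the $\mu_i$ sum to zero but are not identically zero, at least one of them is strictly positive. For every real $s$, the perturbation $x = \sum_i (\lambda_i - s\mu_i) p_i$ still has coefficients summing to $1$. Setting $s := \min\{\lambda_i / \mu_i : \mu_i > 0\}$, which is nonnegative, the coefficients $\lambda_i - s\mu_i$ are all nonnegative (for indices with $\mu_i > 0$ this is by the choice of $s$, and for indices with $\mu_i \le 0$ it follows from $-s\mu_i \ge 0$), and at least one of them equals zero. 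Dropping the corresponding term yields a convex combination of strictly fewer than $k$ points of $P$ equal to $x$, contradicting the minimality of $k$.

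The argument is elementary, and I do not anticipate any real obstacle. The one subtlety worth flagging is to begin the minimization with strictly positive weights, so that the reduction step actually shortens the representation; this is a free normalization, since any zero-weight term can simply be discarded at the outset. Once affine independence is established, the points $p_1, \ldots, p_k$ form the vertices of a genuine simplex in $\mathbb{R}^d$ containing $x$, completing the proof.
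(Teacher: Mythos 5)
Your proof is correct: this is the standard minimal-representation argument for Carath\'eodory's theorem (take a convex combination with the fewest terms, use an affine dependence to perturb the weights until one vanishes, contradicting minimality), and every step checks out, including the handling of the signs of the $\mu_i$ and the normalization to strictly positive weights at the outset. Note that the paper itself does not prove this statement at all --- it is quoted as a known result with a citation --- so there is no in-paper argument to compare against; your write-up is a complete and self-contained proof of the cited fact.
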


\begin{proposition}\label{ch31} Let $V$ be any subset of $\{1, \ldots, p\}.$ Suppose that for some positive integer $t_0,$ the set $\{\mathbf{w_h}(t_0) \mid h \in V\}$ is affinely independent. Then $\{\mathbf{w_h}(t) \mid h \in V\}$ is affinely independent for $t \gg 0.$ \end{proposition}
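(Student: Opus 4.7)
The plan is to translate affine independence into the non-vanishing of a single polynomial in $t$, and then use the fact that a nonzero polynomial has only finitely many roots.

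Let $k = |V|$ and enumerate $V = \{h_1, \ldots, h_k\}$. Recall that $\mathbf{w}_{h_1}(t), \ldots, \mathbf{w}_{h_k}(t)$ are affinely independent in $\mathbb{R}^n$ if and only if the augmented matrix
\[ M(t) := \begin{pmatrix} 1 & 1 & \cdots & 1 \\ \mathbf{w}_{h_1}(t) & \mathbf{w}_{h_2}(t) & \cdots & \mathbf{w}_{h_k}(t) \end{pmatrix} \in \mathbb{R}^{(n+1) \times k} \]
has rank $k$, which in turn is equivalent to the existence of some $k \times k$ submatrix of $M(t)$ with nonzero determinant. I would take this as my working definition.

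By hypothesis, $M(t_0)$ has rank $k$, so there is some choice of $k$ rows of $M$ whose induced $k \times k$ submatrix $M_{0}(t)$ satisfies $\det M_0(t_0) \neq 0$. The key observation is that each entry of $M_0(t)$ lies in $\mathbb{R}[t]$ (the top row is constant, and all other entries are coordinates of the polynomial vectors $\mathbf{w}_h$), so $D(t) := \det M_0(t)$ is a polynomial in $t$. Because $D(t_0) \neq 0$, the polynomial $D$ is not identically zero, and hence has only finitely many real roots.

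Therefore $D(t) \neq 0$ for $t \gg 0$, so $M(t)$ has rank at least $k$ (and trivially at most $k$) for all sufficiently large $t$, giving the claimed affine independence. I do not anticipate any serious obstacle here; the entire content of the statement is that affine independence is a Zariski-open condition in the coefficients, which passes immediately through the polynomial parametrization.
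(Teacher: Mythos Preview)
Your proof is correct and is essentially the same as the paper's: both reduce affine independence to the non-vanishing of a polynomial determinant and then use that a nonzero polynomial has finitely many roots. The only cosmetic difference is that the paper encodes affine independence via the matrix of differences $\mathbf{w}_h - \mathbf{w}_{\min(V)}$ and its $(|V|-1)\times(|V|-1)$ minors, whereas you use the equivalent homogenized $(n+1)\times k$ matrix with a row of ones; these two formulations are related by elementary column operations and Laplace expansion along the first row.
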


\begin{proof} Let $X$ be the matrix of polynomials with columns which are $\mathbf{w_h}-\mathbf{w_{\min(V)}}$ for all $h$ in $V \setminus \{\min(V)\}.$ For all $t,$ $\{\mathbf{w_h}(t) \mid h \in V\}$ is affinely dependent if and only if the determinants of all $|V|-1 \times |V|-1$ minors of $X$ evaluated at $t$ equal $0.$ There exists such a minor of $X$ whose determinant is nonzero at $t_0.$ The determinant is a polynomial of $t$, so it is nonzero at $t$ for $t \gg 0,$ which gives the desired result. \end{proof}

\begin{proposition} \label{ch32}  Let $V$ be a subset of $\{1, \ldots, p\}.$ Suppose that the set $\{\mathbf{w_h}(t) \mid h \in V\}$ is affinely independent for $t \gg 0.$ Let $\mathbf{w}$ be in $\mathbb{R}[t]^n.$ Then either $\mathbf{w}(t)$ is a convex combination of $\{\mathbf{w_h}(t) \mid h \in V\}$ for $t \gg 0$ or $\mathbf{w}(t)$ is not a convex combination of $\{\mathbf{w_h}(t) \mid h \in V\}$ for $t \gg 0.$
\end{proposition}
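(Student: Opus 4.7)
The plan is to reduce the proposition to a statement about signs of rational functions of $t$. Fix any $h_0 \in V$, and let $\mathbf{u_h}(t) := \mathbf{w_h}(t) - \mathbf{w_{h_0}}(t)$ for $h \in V \setminus \{h_0\}$. Since $\{\mathbf{w_h}(t) \mid h \in V\}$ is affinely independent for $t \gg 0$, the polynomial vectors $\{\mathbf{u_h}(t)\}_{h \neq h_0}$ are linearly independent for $t \gg 0$; equivalently, some $(|V|-1) \times (|V|-1)$ minor $D(t)$ of the matrix whose columns are the $\mathbf{u_h}(t)$ is a nonzero polynomial of $t$.

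Next I would determine whether $\mathbf{w}(t) - \mathbf{w_{h_0}}(t)$ lies in the span of $\{\mathbf{u_h}(t)\}_{h \neq h_0}$. This holds if and only if every $|V| \times |V|$ minor of the augmented matrix with extra column $\mathbf{w}(t) - \mathbf{w_{h_0}}(t)$ vanishes. Each such minor is a polynomial in $t$, hence either identically zero or nonzero for $t \gg 0$. If some minor is not identically zero, then for $t \gg 0$ the augmented matrix has rank $|V|$, so $\mathbf{w}(t)$ does not even lie in the affine hull of the $\mathbf{w_h}(t)$, and in particular is not a convex combination; this settles the ``eventually not'' branch of the dichotomy.

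In the remaining case every such minor is identically zero, so for all $t$ with $D(t) \neq 0$ (in particular for $t \gg 0$) there exist unique coefficients $\mu_h(t)$ with $\mathbf{w}(t) - \mathbf{w_{h_0}}(t) = \sum_{h \neq h_0} \mu_h(t) \mathbf{u_h}(t)$. By Cramer's rule each $\mu_h(t)$ is a quotient of a polynomial by $D(t)$, hence a rational function of $t$. Setting $\lambda_{h_0}(t) := 1 - \sum_{h \neq h_0} \mu_h(t)$ and $\lambda_h(t) := \mu_h(t)$ for $h \neq h_0$, each $\lambda_h$ is a rational function of $t$, and $\mathbf{w}(t)$ is a convex combination of $\{\mathbf{w_h}(t) \mid h \in V\}$ exactly when $\lambda_h(t) \ge 0$ for every $h \in V$.

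The proof then finishes by the elementary observation that a rational function of $t$ which is not identically zero has only finitely many zeros, so its sign is constant for $t \gg 0$; hence each $\lambda_h(t)$ is either eventually $\ge 0$ or eventually $<0$. If every $\lambda_h$ is eventually nonnegative then $\mathbf{w}(t)$ is a convex combination for $t \gg 0$; otherwise some $\lambda_h$ is eventually negative and $\mathbf{w}(t)$ is not a convex combination for $t \gg 0$. I do not anticipate a major obstacle here; the only care needed is in separating the two cases ``not in the affine hull'' and ``in the affine hull but with a coefficient of the wrong sign,'' each of which reduces cleanly to the fact that a polynomial (respectively rational function) of $t$ has a definite sign for $t \gg 0$.
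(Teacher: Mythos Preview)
Your proof is correct and follows essentially the same route as the paper's: split into the cases where $\mathbf{w}(t)$ is or is not eventually in the affine hull of the $\mathbf{w_h}(t)$, in the latter case conclude immediately, and in the former extract the unique affine coefficients as rational functions (via a nonvanishing $(|V|-1)\times(|V|-1)$ minor and Cramer's rule) whose signs stabilize for $t\gg 0$. The only cosmetic differences are that the paper invokes its Proposition~\ref{ch31} for the case split and phrases the final check as $c_h(t)\in[0,1]$ rather than $\lambda_h(t)\ge 0$, which is equivalent since the coefficients sum to $1$.
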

This is still true if the vectors are not affinely independent for $t \gg 0,$ but it is harder to prove.
\begin{proof} By Proposition \ref{ch31}, either $\{\mathbf{w_h}(t) \mid h \in V\} \cup \{\mathbf{w}(t)\}$ is affinely independent for $t \gg 0$ or it is affinely dependent for $t \gg 0.$ In the former case, $\mathbf{w}$ is not a convex combination of $\{\mathbf{w_h}(t) \mid h \in V\}$ for $t \gg 0,$ as desired. 

Now consider the latter case. The set $\{\mathbf{w_h}(t) \mid h \in V\}$ is affinely independent, and  $\\ \{\mathbf{w_h}(t) \mid h \in V\} \cup \{\mathbf{w}(t)\}$ is affinely dependent for $t \gg 0.$ From standard linear algebra, there exist functions $c_h(t)$ for $h$ in $V$ such that for $t \gg 0,$ $\sum_{h \in V} c_h(t)=1$ and 
\begin{align} \label{E4} \mathbf{w}(t) = \sum_{h \in V} c_h(t) \mathbf{w_h}(t).\end{align}

This expresses $\mathbf{w}(t)$ as an affine combination of $\{\mathbf{w_h}(t) \mid h \in V\}.$ For $t \gg 0$, $c_h(t)$ is uniquely determined. We claim that the functions $c_h$ eventually agree with rational functions. Define $X$ as in Proposition \ref{ch31}. There is a $|V|-1 \times |V|-1$ minor of $X,$ call it $X^*,$ whose determinant is a nonzero polynomial. Let $P$ be the projection onto the corresponding $|V|-1$ rows (the corresponding $|V|-1$ dimensional subspace of $\mathbb{R}^d$). Then $\{P(\mathbf{w_h}(t)) \mid h \in V\}$ is affinely independent for $t \gg 0,$ and $P(\mathbf{w}(t))$ is an affine combination of $\{P(\mathbf{w_h}(t)) \mid h \in V\}.$ As before, there exist functions $c^*_h(t)$ for $h$ in $V$ such that $\sum_{h \in V} c^*_h(t)=1$ and
\begin{displaymath} P(\mathbf{w}(t)) = \sum_{h \in V} c^*_h(t) P(\mathbf{w_h})(t), \end{displaymath}
which are uniquely defined for $t \gg 0.$ An equivalent equation is 
\begin{displaymath} P(\mathbf{w}(t)-\mathbf{w_{\min(V)}}(t)) = \sum_{h \in V \setminus \min(V)} c^*_h(t) P(\mathbf{w_h}-\mathbf{w_{\min(V)}}(t)). \end{displaymath}

The functions $c^*_h(t)$ for $h \in V \setminus \min(V)$ is given by multiplying the inverse of the matrix associated with the right hand side by the vector of the left hand side. Specifically the vector with coordinates $c^*_h(t)$ for $h \in V \setminus \min(V)$ is given by multiplying the inverse of $X^*$ by the left hand side. Therefore, $c^*_h$ for $h \in V \setminus \min(V)$ is a rational function (more precisely, agrees with a rational function) for $t \gg 0$. Since $\sum_{h \in V} c^*_h(t)=1,$ $c^*_{\min(V)}$ is also a rational function. On the other hand, applying the projection to (\ref{E4}) gives
\begin{displaymath} P(\mathbf{w}(t)) = \sum_{h \in V} c_h(t) P(\mathbf{w_h})(t), \end{displaymath}
so for $h$ in $V$ and $t \gg 0,$ $c_h(t)=c_h^*(t).$ Therefore, the functions $c_h$ eventually agree with rational functions.

For $t \gg 0,$ $\mathbf{w}(t)$ is a convex combination of $\{\mathbf{w_h}(t) \mid h \in V\}$ if and only if for all $h$ in $V,$ $0 \le c_h(t) \le 1.$ A rational function either eventually lies in $[0,1]$ or eventually does not lie in $[0,1].$  Thus, either $\mathbf{w}(t)$ is a convex combination of $\{\mathbf{w_h}(t) \mid h \in V\}$ for $t \gg 0$ or $\mathbf{w}(t)$ is not a convex combination of $\{\mathbf{w_h}(t) \mid h \in V\}$ for $t \gg 0,$ as desired. \end{proof}

\begin{proof}[Proof of Proposition \ref{ch3}]
 Let $i$ be in $\{1, \ldots, p\}.$ For all $t,$ $\mathbf{w_i}(t)$ is a vertex of the convex hull of $\mathbf{w_1}(t), \ldots, \mathbf{w_p}(t)$ if and only if it is not a convex combination of $\{\mathbf{w_h}(t)\}_{h \neq i}.$ By Carath\'eodory's Theorem, $\mathbf{w_i}(t)$ is a convex combination of $\{\mathbf{w_h}(t)\}_{h \neq i}$ if and only if it lies in a simplex with vertices in this set. By Proposition \ref{ch31}, each subset of $\{\mathbf{w_h}\}_{h \neq i}$ is either a simplex for $t \gg 0$ or not a simplex for $t \gg 0.$ There are finitely many subsets, so for $t \gg 0,$ these properties stabilize. By Proposition \ref{ch32}, for each subset of $\{\mathbf{w_h}\}_{h \neq i}$ which is a simplex for $t \gg 0,$ $\mathbf{w_i}(t)$ is either a convex combination for $t \gg 0$ or not a convex combination for $t \gg 0.$ Combining these facts tells us that either $\mathbf{w_i}(t)$ is a vertex of the convex hull of $\mathbf{w_1}(t), \ldots, \mathbf{w_p}(t)$ for $t \gg 0$ or not a vertex of the convex hull for $t \gg 0.$

Let $U$ be the set of all $i$ such that $\mathbf{w_i}(t)$ is a vertex of the convex hull of $\mathbf{w_1}(t), \ldots, \mathbf{w_p}(t)$ for $t \gg 0.$ Since $\mathbf{w_1}, \ldots, \mathbf{w_p}$ are distinct polynomial vectors, for $t \gg 0,$ $\{\mathbf{w_h}(t) \mid h \in U \}$ is the set of vertices of the convex hull of $\mathbf{w_1}(t), \ldots, \mathbf{w_p}(t)$ with no repeats, as desired.
\end{proof}

\begin{remark} Theorem \ref{CW} may be used to prove Theorem \ref{RCF} using induction on $\ell.$ Consider a PILP in reduced canonical form. The vertices of $R(t)$ are of size $O(t),$ so Theorem \ref{CW} applies. Let $d$ be prescribed by the theorem. For $\ell=1$ and $t$ restricted to a residue class $\pmod d,$ $f_\ell(t)$ is the maximum over the vertices of the convex hull of the lattice point set, which is a finite set of polynomial vectors, of $\mathbf{c}^{\intercal}(t),$ which is then a maximum of polynomials. One of these polynomials is maximal for $t \gg 0$; it corresponds a single polynomial vector $\mathbf{v}$ at which the maximum is obtained for $t\gg 0.$ (There may exist others.) Since $f_\ell$ is EQP when restricted to each residue class $\pmod{d},$  $f_\ell$ is EQP.

Suppose that Theorem \ref{RCF} is true for $\ell=\ell_0$ and consider $\ell=\ell_0+1.$ Let $Q$ be a PILP in reduced canonical form, $d$ as in Theorem \ref{CW}, and $t$ restricted to a residue class $\pmod{d}$ (throughout this paragraph). The maximum value of $\mathbf{c}^{\intercal}(t)$ is obtained at some polynomial vector $\mathbf{v}$ in $L(t).$ Suppose that there is another PILP $Q'$ for which $L'(t)=L(t) \setminus \{\mathbf{v}(t)\}$ (for $t$ restricted to the residue class $\pmod{d}$). Then $f_\ell(t)=f'_{\ell-1}(t).$ Intuitively, it is possible to construct $Q'$ by adding one more constraint to $Q$ to exclude $\mathbf{v}(t)$ because it is a vertex of the convex hull of $L(t)$ (for $t$ restricted to the residue class $\pmod{d}$). We do not do so explicitly here. By assumption, $f'_{\ell-1}$ is EQP, so $f_\ell$ is EQP when restricted to each residue class $\pmod{d},$ so we are done. \end{remark}

\begin{remark} The above idea easily proves that the $\ell^{\text{th}}$ largest value without multiplicity is eventually quasi-polynomial. Again, we use induction on $\ell.$ The case $\ell=1$ is as before. The inductive step now only involves adding the constraint $\mathbf{c}^{\intercal}(t) \mathbf{x} \le \mathbf{c}^{\intercal}(t) \mathbf{v}(t)-1.$ \end{remark}

\section*{Acknowledgements}

The author thanks Joe Gallian, Levent Alpoge, and an anonymous referee for helpful comments on the manuscript and advice to simplify proofs. This research was conducted at the University of Minnesota Duluth REU and was supported by NSF grant 1358695 and NSA grant H98230-13-1-0273.

\end{document}